\newfont{\Bbb}{msbm10 scaled\magstephalf}
 \newtheorem{thm}{Theorem}[section]
 \newtheorem{cor}[thm]{Corollary}
 \newtheorem{Lemma}[thm]{Lemma}
 \theoremstyle{definition}
\theoremstyle{remark}
 \numberwithin{equation}{section}
\begin{document}
\title[Weighted composition operator]{Estimates of essential norms of weighted composition operator from Bloch type spaces to
Zygmund type spaces}
\author[Y.X. Liang and Z.H. Zhou]{Yu-Xia Liang and Ze-Hua Zhou$^*$}

\address{\newline  Yu-Xia Liang\newline Department of Mathematics\newline
Tianjin University\newline Tianjin 300072, P.R. China.}
\email{liangyx1986@126.com}

\address{\newline  Ze-Hua Zhou\newline Department of Mathematics\newline Tianjin University\newline Tianjin
300072, P.R. China.} \email{zehuazhoumath@aliyun.com;zhzhou@tju.edu.cn}

\keywords{weighted composition operator, Bloch type space, Zygmund type
space, bounded, compact, essential norm}

 \subjclass[2010]{Primary: 47B38; Secondary: 26A24, 30H30, 47B33.}

\date{}
\thanks{\noindent $^*$Corresponding author.\\
This work was supported in part by the National Natural Science Foundation of
China (Grant Nos. 11371276; 11301373; 11201331).}

\begin{abstract}  Let $u$ be a holomorphic function and $\varphi$ a holomorphic self-map of the open unit disk $\mathbb{D}$ in the complex plane. We give some new characterizations for the boundedness of the weighted composition operators $uC_{\varphi}$ from Bloch type spaces to
Zygmund type spaces in $\mathbb{D}$ in terms of $u, \varphi$, their derivatives and the $n$-th
power $\varphi^n$ of $\varphi$. Moreover, we obtain some similar estimates for their essential norms. From which the sufficient and necessary conditions of compactness of the operators $uC_{\varphi}$ follows immediately.
\end{abstract}

\maketitle
\section{Introduction}
 Let $\mathbb{D}$ be the open unit disk in the complex plane $\mathbb{C}$. Let $H(\mathbb{D})$ denote the class
of all functions analytic on $\mathbb{D}$ and $S(\mathbb{D})$ the collections
of all holomorphic self-map of $\mathbb{D}$. We give the weighted Banach spaces of analytic functions
$$H_\nu^\infty=\{f\in H(\mathbb{D}): \;\|f\|_\nu:=\sup\limits_{z\in\mathbb{D}} \nu (z)|f(z)|<\infty\}$$
endowed with norm $\|.\|_\nu$, where the weight $\nu:\mathbb{D}\rightarrow \mathbb{R}_+$ is a continuous strictly positive and bounded function. The
weight $\nu$ is called radial, if $\nu(z)=\nu(|z|)$ for all $z\in \mathbb{D}$. For a weight $\nu$ the associated weight $\tilde{\nu}$ is defined by $$\tilde{\nu}(z):=(\sup\{|f(z)|; \;f\in H_\nu^\infty,\; \|f\|_\nu \leq 1\})^{-1},\;z\in\mathbb{D}.$$
It is obvious that $\tilde{\nu}_\alpha=\nu_\alpha$ for  the standard weights $\nu_\alpha(z)=(1-|z|^2)^\alpha,$ where $0<\alpha<\infty$. Besides the standard weights $\nu_\alpha$, we also consider the  logarithmic weight  $$\nu_{log}(z):=\left(\log \left( \frac{e}{1-|z|^2}\right)\right)^{-1},\;\;z\in\mathbb{D}.$$ It is not difficult to see that also $\tilde{\nu}_{\log}= \nu_{\log}$. Moreover, the Banach space of bounded analytic functions on $\mathbb{D}$ is denoted by $H^\infty.$ In the following,  let $\|f\|_{\nu_\alpha}$ and $\|f\|_{\nu_{\log}}$ denote the norms  defined on the weighted Banach spaces $H_{\nu_\alpha}^\infty$ and $H_{\nu_{\log}}^\infty$.

Recall that the Bloch type space $\mathcal{B}^\alpha$ on the unit disk, consists of all $f\in H(\mathbb{D})$ satisfying
\begin{eqnarray*}\|f\|_{\alpha}:=\sup\limits_{z\in \mathbb{D}}(1-|z|^2)^\alpha|f'(z)|<\infty\end{eqnarray*}  endowed with the norm
$\|f\|_{\mathcal {B}_\alpha}=|f(0)|+\|f\|_\alpha<\infty.$
As we all known that for $0<\alpha<1$, $\mathcal{B}^\alpha$ is a subspace of $H^\infty.$ When $\alpha=1,$ we get the classical Bloch space $\mathcal{B}.$

For $0<\beta<\infty$, we denote by $\mathcal{Z}_\beta$ the Zygmund type space of those functions $f\in H(\mathbb{D})$ such that
$$ \sup\limits_{z\in\mathbb{D}} (1-|z|^2)^\beta |f''(z)|<\infty$$ equipped with the norm
\begin{eqnarray*}\|f\|_{\mathcal{Z}_\beta}:=|f(0)|+|f'(0)|+\sup\limits_{z\in\mathbb{D}} (1-|z|^2)^\beta |f''(z)|. \end{eqnarray*}
For $\beta=1$ we obtain the classical Zygmund space $\mathcal{Z}$.

The composition operator $C_\varphi$ induced by $\varphi\in S(\mathbb{D})$ is defined on $H(\mathbb{D})$  by $C_\varphi(f) = f\circ\varphi$ for any  $f\in H(\mathbb{D})$. This operator is well studied for many years, we refer to the books \cite{CM, Sha}, which are excellent
sources for the development of the theory of composition operators in function spaces.
For $u\in H(\mathbb{D})$, we define the weighted composition operator $$uC_\varphi f(z)=u(z)f(\varphi(z)),\;\mbox{for}\;f\in H(\mathbb{D}).$$ It is obvious that $uC_\varphi= C_\varphi$ when $u$ is the identity map.


 The essential norm of a continuous linear operator $T$ is the distance from  $T$ to the compact operators $K$,
that is $\|T\|_{e}=\inf\{\|T-K\|: \;K $ is compact $\}.$  Notice that $\|T\|_{e}=0$ if and only if $T$ is compact, so estimates on
$\|T\|_{e}$  lead to conditions for $T$ to be compact. There are lots of papers concerning this topic, the interested readers can refer to  \cite{FZ,GM,MZ,S,ZS,ZZ} and the references therein.

Recently, there is an increase interest to characterize the boundedness and compactness of composition operators acting on Bloch type spaces in terms of the $n$-th power $\varphi^n$ of $\varphi$, see \cite{LZ1,LZ2,WZZ,Zhao}. The similar characterization between
Bloch-type spaces with general radial weights was obtained by Hyv\"{a}rinen et al in \cite{HKLRS}. The natural question to ask is whether the essential norm formula for
composition operators between Bloch-types paces $\mathcal{B}_{\alpha}$ can be generalized to weighted composition operators. In 2012, Manhas and Zhao \cite{MZ1} showed that the question has an affirmative answer when $\alpha\neq 1$; however, they were not able to estimate the essential norm of weighted composition operators on the Bloch space $\mathcal{B}$. The open problem was solved by Hyv\"{a}rinen and Lindstr\"{o}m in \cite{HL}. Moreover, they presented a direct method to calculate the essential norms of weighted composition operators $uC_{\varphi}$ actingon all Bloch-type spaces $\mathcal{B}_{\alpha}$ in terms of $u$ and the $n$-th power of $\varphi$. After that, Esmaeili and Lindstr\"{o}m \cite{EL} gave similar characterizations for the weighted composition operators acting on Zygmund type spaces.

Based on the above foundations, in this paper we used an approach due to Hyv\"{a}rinen and Lindstr\"{o}m in \cite{HL} and Esmaeili and Lindstr\"{o}m in \cite{EL} to obtain new characterizations for bounded weighted composition operators from Bloch type spaces to
Zygmund type spaces, and to give similar estimates of the essential norms of such operators.

Throughout the remainder of this paper, $C$ will denote a positive constant, the exact value of which will vary from one appearance to the next.
The notation $A\preceq B$, $A\succeq B$ and $A\asymp B$ mean  that there may be different  positive constants $C$ such that $A\leq CB$, $A\geq CB$ and $B/C \leq A \leq CB$.

\section{Some Lemmas}
In this section, we give some auxiliary results which will be used in proving the main results of the paper. The following two lemmas is crucial to the new characterizations.
\begin{Lemma}\cite[Theorem 2.1]{M} or \cite[Theorem 2.4]{HKLRS} Let $\nu$ and $w$ be radial, non-increasing weights tending to zero at the boundary of $\mathbb{D}$. Then

(i) the weighted composition operator $uC_\varphi$ maps $H_\nu^\infty$ into $H_w^\infty$ if and only if
\begin{eqnarray*}\sup\limits_{n\geq0}\frac{\|u \varphi^n\|_w}{\|z^n\|_\nu} \asymp \sup\limits_{z\in \mathbb{D}} \frac{w(z)}{\tilde{ \nu}(\varphi(z))}|u(z)|<\infty, \end{eqnarray*} with norm comparable to the above supremum.\vspace{3mm}

(ii) $\|uC_\varphi\|_{e,H_\nu^\infty\rightarrow H_w^\infty}=\limsup\limits_{n\rightarrow \infty}\frac{\|u\varphi^n\|_w }{\|z^n\|_\nu}=\limsup\limits_{|\varphi(z)|\rightarrow 1}\frac{w(z)}{\bar{\nu}(\varphi(z))} |u(z)|.$
\end{Lemma}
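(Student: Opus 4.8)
The plan is to derive both parts from a single structural fact about radial weights, namely that the normalized monomials form an essentially norming family for the associated weight:
\[
\frac{1}{\tilde{\nu}(z)} \asymp \sup_{n\ge 0}\frac{|z|^n}{\|z^n\|_\nu}, \qquad z\in\mathbb{D}.
\]
The inequality $\succeq$ is immediate, since each $z^n/\|z^n\|_\nu$ lies in the unit ball of $H_\nu^\infty$ and so its value at $z$ is at most $1/\tilde{\nu}(z)$ by the very definition of $\tilde{\nu}$. The reverse inequality is where radiality and the non-increasing hypothesis enter: given $f=\sum_k a_k z^k$ with $\|f\|_\nu\le 1$, one controls the Taylor coefficients via $|a_k|\,\|z^k\|_\nu\preceq 1$ and sums the resulting series in $|z|$ against the monomial supremum. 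This is exactly the content of the cited results of Montes-Rodr\'iguez and Hyv\"arinen et al., and it is the step I expect to demand the most care.

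Granting the comparison, part (i) is a double interchange of suprema. First,
\[
\sup_{n\ge 0}\frac{\|u\varphi^n\|_w}{\|z^n\|_\nu}
=\sup_{n}\sup_{z} \frac{w(z)|u(z)|\,|\varphi(z)|^n}{\|z^n\|_\nu}
=\sup_{z} w(z)|u(z)|\sup_{n}\frac{|\varphi(z)|^n}{\|z^n\|_\nu}
\asymp \sup_{z} \frac{w(z)}{\tilde{\nu}(\varphi(z))}|u(z)|,
\]
applying the comparison with $z$ replaced by $\varphi(z)$. Next, this last quantity is precisely the operator norm:
\[
\|uC_\varphi\|=\sup_{\|f\|_\nu\le 1}\sup_{z} w(z)|u(z)|\,|f(\varphi(z))|
=\sup_{z} w(z)|u(z)|\sup_{\|f\|_\nu\le 1}|f(\varphi(z))|
=\sup_{z}\frac{w(z)}{\tilde{\nu}(\varphi(z))}|u(z)|,
\]
where I again interchange the two suprema and invoke the definition of $\tilde{\nu}$. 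Hence boundedness of $uC_\varphi\colon H_\nu^\infty\to H_w^\infty$ is equivalent to finiteness of either supremum, with operator norm comparable to it.

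For part (ii) I would estimate the essential norm from below and above. For the lower bound, set $g_n=z^n/\|z^n\|_\nu$, so $\|g_n\|_\nu=1$; since $\|z^n\|_\nu\ge \nu(r_0)r_0^n$ for any fixed $r_0\in(0,1)$, one gets $|g_n(z)|\le \nu(r_0)^{-1}(|z|/r_0)^n\to 0$ uniformly on compact subsets, so $g_n\to 0$ in the compact-open topology while staying norm-bounded. Consequently $\|K g_n\|_w\to 0$ for every compact $K$, and since $uC_\varphi g_n=u\varphi^n/\|z^n\|_\nu$,
\[
\|uC_\varphi-K\|\ge \limsup_{n\to\infty}\|(uC_\varphi-K)g_n\|_w
=\limsup_{n\to\infty}\frac{\|u\varphi^n\|_w}{\|z^n\|_\nu};
\]
taking the infimum over compact $K$ yields the lower bound. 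For the upper bound I would approximate $uC_\varphi$ by the compact operators $uC_\varphi D_r$, where $D_r f(z)=f(rz)$ is the compact dilation, and estimate $\|uC_\varphi(I-D_r)\|$ by splitting the defining supremum over $z$ into the regions $|\varphi(z)|\le s$ and $|\varphi(z)|>s$. On the first region equicontinuity of the unit ball (a normal family) forces $f(\varphi(z))-f(r\varphi(z))\to 0$ uniformly as $r\to 1$; on the second region the estimates of part (i), together with $\tilde{\nu}(r\varphi(z))\ge\tilde{\nu}(\varphi(z))$, bound the contribution by $\sup_{|\varphi(z)|>s}\frac{w(z)}{\tilde{\nu}(\varphi(z))}|u(z)|$, whose limit as $s\to 1$ is $\limsup_{|\varphi(z)|\to 1}\frac{w(z)}{\tilde{\nu}(\varphi(z))}|u(z)|$. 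Matching the sharp constant so that the two bounds coincide is the delicate point, and is handled by the careful choice of approximants in the cited references. Finally, the equality of the two $\limsup$ expressions in (ii) follows from the monomial comparison restricted to points with $|\varphi(z)|$ near $\partial\mathbb{D}$, exactly as in the interchange computation of part (i).
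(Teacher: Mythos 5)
First, a point of comparison: the paper offers no proof of this lemma at all --- it is imported verbatim from Montes-Rodr\'iguez \cite[Theorem 2.1]{M} and Hyv\"arinen et al.\ \cite[Theorem 2.4]{HKLRS} --- so your attempt must stand on its own, and it has two genuine gaps. The first is the proof you sketch for the central comparison $1/\tilde{\nu}(z)\asymp\sup_{n\geq 0}|z|^n/\|z^n\|_\nu$. Bounding the Taylor coefficients by $|a_k|\leq 1/\|z^k\|_\nu$ and then ``summing the resulting series'' cannot work: for the standard weight $\nu_\alpha$ one has $\|z^k\|_{\nu_\alpha}\asymp (k+1)^{-\alpha}$ (this is the paper's Lemma 2.2(i)), so the summed bound is $\sum_k (k+1)^\alpha r^k\asymp (1-r)^{-\alpha-1}$, whereas the monomial supremum and $1/\tilde{\nu}_\alpha(r)$ are both $\asymp(1-r)^{-\alpha}$; the sum overshoots by a factor $1/(1-r)$, so this route proves nothing even in the model case. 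The correct argument is different in kind: by rotation invariance $1/\tilde{\nu}(r)=\sup\{\max_{|z|=r}|f(z)|:\|f\|_\nu\leq 1\}$ is a supremum of functions log-convex in $\log r$ (Hadamard three circles), hence itself log-convex and dominated by the largest log-convex minorant of $1/\nu$, which is exactly $\sup_{t\geq 0}r^t/\|z^t\|_\nu$; replacing real $t$ by integers costs only a factor $r$. Since every other step of your proof leans on this comparison, this is not a deferrable detail --- it \emph{is} the theorem.

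The second gap is in your lower bound for (ii): you claim $\|Kg_n\|_w\to 0$ for every compact $K$ merely because $g_n=z^n/\|z^n\|_\nu$ is bounded and tends to zero uniformly on compacta. That implication is false in general. Take the weight $\nu\equiv 1$ (radial and non-increasing, but not vanishing at the boundary), let $\Lambda$ be a weak-star cluster point in $(H^\infty)^*$ of the evaluations $\delta_{r_k}$ with $r_k\to 1$, and set $Kf=\Lambda(f)\cdot 1$; then $K$ is rank one, $z^n\to 0$ uniformly on compacta with $\|z^n\|_\infty=1$, yet $\Lambda(z^n)=1$ for every $n$, so $\|Kz^n\|$ does not tend to $0$. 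The step therefore requires precisely the hypothesis you never invoke, namely that the weights tend to zero at the boundary: then each $g_n$ lies in the little space $H_\nu^0=\{f:\nu|f|\to 0 \text{ at } \partial\mathbb{D}\}$, whose functionals are represented by finite measures on the open disk via the isometry $f\mapsto \nu f$ into $C_0(\mathbb{D})$, so dominated convergence shows that bounded, compactly-null sequences in $H_\nu^0$ are weakly null; complete continuity of compact operators then gives $\|Kg_n\|_w\to 0$. The rest of your architecture --- the interchange of suprema in (i), the dilation approximants and the splitting into $|\varphi(z)|\leq s$ and $|\varphi(z)|>s$ for the upper bound, and the honestly acknowledged loss of a constant against the claimed equalities --- matches the strategy of the cited sources and is sound, but without repairs at these two points the proof does not stand.
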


\begin{Lemma}\cite[Lemma 2.1]{HL} For $0<\alpha<\infty$ we have

(i) $\lim\limits_{n\rightarrow \infty} (n+1)^\alpha \|z^n\|_{\nu_\alpha}=\left(\frac{2\alpha}{e}\right)^\alpha.$

(ii) $\lim\limits_{n\rightarrow \infty} (\log n)\|z^n\|_{\nu_{\log}}=1.$
\end{Lemma}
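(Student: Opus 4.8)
The plan is to reduce each norm to a one–variable extremal problem and then carry out an asymptotic analysis. Since the monomial satisfies $|z^n|=|z|^n$ and both weights $\nu_\alpha$ and $\nu_{\log}$ are radial, writing $r=|z|$ gives
\begin{equation*}
\|z^n\|_{\nu_\alpha}=\sup_{0\le r<1}(1-r^2)^\alpha r^n,\qquad
\|z^n\|_{\nu_{\log}}=\sup_{0\le r<1}\frac{r^n}{1-\log(1-r^2)}.
\end{equation*}
Throughout I would substitute $s=1-r^2\in(0,1]$, so that $r^n=(1-s)^{n/2}$ and the logarithmic weight becomes $(1-\log s)^{-1}$.

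For part (i) the extremal problem can be solved exactly. Differentiating $(1-r^2)^\alpha r^n$ and equating to zero produces the factor $-2\alpha r^2+n(1-r^2)$, whose vanishing gives the unique interior maximiser $r_n^2=n/(n+2\alpha)$, hence $1-r_n^2=2\alpha/(n+2\alpha)$ and
\begin{equation*}
\|z^n\|_{\nu_\alpha}=\left(\frac{2\alpha}{n+2\alpha}\right)^\alpha\left(\frac{n}{n+2\alpha}\right)^{n/2}.
\end{equation*}
Multiplying by $(n+1)^\alpha$, the first factor tends to $(2\alpha)^\alpha$, while taking logarithms in the second factor and using $\log(1-x)=-x+O(x^2)$ with $x=2\alpha/(n+2\alpha)$ shows it converges to $e^{-\alpha}$; the product is then $(2\alpha/e)^\alpha$, as claimed.

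Part (ii) is the main obstacle, because the extremal equation for $(1-s)^{n/2}/(1-\log s)$ is transcendental and admits no closed form, so I would argue by a two–sided estimate. For the lower bound I would insert the test value $s=2/(n\log n)$: here $(1-s)^{n/2}=\exp(\tfrac n2\log(1-s))\to1$ since $\tfrac n2 s=1/\log n\to0$, while $1-\log s=\log n+\log\log n+1-\log 2$, so that $\log n$ times the quotient tends to $1$ and hence $\liminf_n(\log n)\|z^n\|_{\nu_{\log}}\ge1$. For the upper bound I would use $(1-s)^{n/2}\le e^{-ns/2}$ and study $G(s)=e^{-ns/2}/(1-\log s)$. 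Its logarithmic derivative vanishes exactly when $s(1-\log s)=2/n$; since $s\mapsto s(1-\log s)$ increases from $0$ to $1$ on $(0,1)$, this critical point $s^\ast$ is unique and is a maximum. The defining relation gives $e^{-ns^\ast/2}=e^{-1/(1-\log s^\ast)}\le1$, and because $1-\log s^\ast\ge1$ forces $s^\ast\le 2/n$, also $1-\log s^\ast\ge\log n+1-\log2$. Therefore
\begin{equation*}
(\log n)\,\|z^n\|_{\nu_{\log}}\le(\log n)\,G(s^\ast)\le\frac{\log n}{1-\log s^\ast}\le\frac{\log n}{\log n+1-\log2}\longrightarrow1,
\end{equation*}
giving $\limsup_n(\log n)\|z^n\|_{\nu_{\log}}\le1$, and combining the two bounds yields the limit $1$. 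The delicate point throughout is to match the numerator (the factor tending to $1$) against the logarithmic denominator sharply enough that the limit comes out to be exactly $1$ rather than merely a comparable quantity.
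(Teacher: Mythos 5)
Your proof is correct, but note that the paper itself contains no proof of this lemma at all: it is quoted verbatim from Hyv\"{a}rinen and Lindstr\"{o}m \cite[Lemma 2.1]{HL}, so your argument supplies something the paper only cites. Your treatment of (i) is the standard calculus computation (essentially the one in the cited source): the supremum of $(1-r^2)^\alpha r^n$ is attained at $r_n^2=n/(n+2\alpha)$, giving the closed form $\|z^n\|_{\nu_\alpha}=\bigl(\tfrac{2\alpha}{n+2\alpha}\bigr)^\alpha\bigl(\tfrac{n}{n+2\alpha}\bigr)^{n/2}$, and the expansion $\log(1-x)=-x+O(x^2)$ yields exactly $(2\alpha/e)^\alpha$. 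Your treatment of (ii) is where you do genuine independent work, since the extremal equation has no closed form: the two-sided estimate is sound. For the lower bound, the test value $s=2/(n\log n)$ indeed gives $(1-s)^{n/2}\rightarrow 1$ and $1-\log s=\log n+\log\log n+1-\log 2$, so $\liminf\geq 1$; for the upper bound, the majorant $G(s)=e^{-ns/2}/(1-\log s)$ has a unique critical point $s^{\ast}$ solving $s(1-\log s)=2/n$ (the map $s\mapsto s(1-\log s)$ has derivative $-\log s>0$, so it increases from $0$ to $1$), and your chain $e^{-ns^{\ast}/2}\leq 1$, $s^{\ast}\leq 2/n$, $1-\log s^{\ast}\geq \log n+1-\log 2$ correctly pins the limit to exactly $1$ rather than mere comparability, which is precisely the delicate point. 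Two harmless remarks: the critical point $s^{\ast}$ exists only once $2/n<1$, i.e. $n\geq 3$ (irrelevant for a limit), and one should note explicitly that $(1-s)^{n/2}\leq e^{-ns/2}$ on all of $(0,1]$ so that $\sup G$ genuinely dominates $\|z^n\|_{\nu_{\log}}$; you use this but state it only in passing. Neither gap is substantive.
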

The next lemma is a well-known characterization for the Bloch-type space on the unit disc, see \cite{Zhu3}.
\begin{Lemma} For $f\in H(\mathbb{D}), m\in \mathbb{N}$ and $\alpha>0,$ then
\begin{eqnarray*}f(z)\in \mathcal{B}^\alpha  \Leftrightarrow \|f\|_{\alpha}\asymp \sup\limits_{z\in \mathbb{D}}(1-|z|^2)^{\alpha+m-1}|f^{(m)}(z)|<\infty. \end{eqnarray*} \end{Lemma}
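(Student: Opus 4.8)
The statement asserts that the seminorm $\|f\|_\alpha$ built from $f'$ is comparable to the quantity built from the higher derivative $f^{(m)}$, and in particular that one is finite exactly when the other is. The plan is to establish a two-sided estimate by passing between consecutive derivatives: Cauchy's integral formula handles the step that raises the order of the derivative, and termwise integration handles the step that lowers it. Since the case $m=1$ is vacuous, I would set up both directions as a single reduction relating $f^{(k)}$ and $f^{(k-1)}$ and then iterate $m-1$ times.

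For the direction $\sup_{z}(1-|z|^2)^{\alpha+m-1}|f^{(m)}(z)|\preceq\|f\|_\alpha$, I would isolate the elementary growth lemma: if $h\in H(\mathbb{D})$ satisfies $(1-|z|^2)^s|h(z)|\le M$ for some $s>0$, then $(1-|z|^2)^{s+1}|h'(z)|\preceq M$. Fixing $z$ and applying Cauchy's formula $h'(z)=\frac{1}{2\pi i}\int_{|w-z|=\rho}\frac{h(w)}{(w-z)^2}\,dw$ on the circle of radius $\rho=(1-|z|)/2$, every such $w$ satisfies $1-|w|\ge\rho$, so $|h(w)|\preceq M(1-|z|)^{-s}$ and the standard Cauchy estimate gives $|h'(z)|\preceq M(1-|z|)^{-(s+1)}$. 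Since $(1-|z|^2)\asymp(1-|z|)$, the claim follows. Applying it with $h=f'$ and $s=\alpha$, then $h=f''$ and $s=\alpha+1$, and so on, each increment of the exponent matching each differentiation, yields the bound after $m-1$ steps.

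For the reverse direction I would integrate. The key computation is that for $s>1$ one has $\int_{0}^{|z|}(1-t)^{-s}\,dt\asymp(1-|z|)^{-(s-1)}$, which converts an $s$-bound on a derivative into an $(s-1)$-bound on its antiderivative. Concretely, writing $f^{(k)}(z)=f^{(k)}(0)+\int_0^z f^{(k+1)}(w)\,dw$, parametrizing $w=tz$, and using $1-t^2|z|^2\ge 1-t|z|$ together with the hypothesis $(1-|w|^2)^{\alpha+k}|f^{(k+1)}(w)|\le N$, one obtains $(1-|z|^2)^{\alpha+k-1}|f^{(k)}(z)|\preceq N+|f^{(k)}(0)|$; here $s=\alpha+k\ge\alpha+1>1$ guarantees the integral converges at every stage, which is exactly where $\alpha>0$ is used. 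Iterating from $k=m-1$ down to $k=1$ recovers $\|f\|_\alpha\preceq N+\sum_{j=1}^{m-1}|f^{(j)}(0)|$.

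The main obstacle, and really the only subtlety, is precisely these boundary constants $f^{(j)}(0)$ produced at each integration. They are finite, so the equivalence of finiteness $f\in\mathcal{B}^\alpha\Leftrightarrow\sup_{z}(1-|z|^2)^{\alpha+m-1}|f^{(m)}(z)|<\infty$ is completely unaffected; however they show that the literal seminorm comparison must be understood up to these finitely many lower-order point evaluations. Equivalently, the estimate becomes an honest equivalence once one passes to the full norm $\|f\|_{\mathcal{B}^\alpha}\asymp\sum_{j=0}^{m-1}|f^{(j)}(0)|+\sup_{z}(1-|z|^2)^{\alpha+m-1}|f^{(m)}(z)|$, or once one restricts attention to the behaviour near $\partial\mathbb{D}$, where the extra terms vanish. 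I would therefore record the conclusion in the form $\|f\|_\alpha\asymp\sup_{z}(1-|z|^2)^{\alpha+m-1}|f^{(m)}(z)|$ modulo lower-order terms, which is all the later essential-norm computations actually require.
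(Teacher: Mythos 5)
Your proof is correct, but it cannot be compared to an internal argument because the paper gives none: Lemma 2.3 is stated as ``well-known'' and deferred entirely to Zhu's paper \cite{Zhu3}. Your two-step scheme is the standard elementary route to this result, and both halves check out. The upward step (a Cauchy estimate on the circle $|w-z|=(1-|z|)/2$, showing that $(1-|z|^2)^{s}|h(z)|\le M$ forces $(1-|z|^2)^{s+1}|h'(z)|\preceq M$, iterated $m-1$ times starting from $h=f'$) and the downward step (writing $f^{(k)}(z)=f^{(k)}(0)+\int_0^z f^{(k+1)}(w)\,dw$ and using $\int_0^{|z|}(1-t)^{-s}\,dt\preceq (1-|z|)^{-(s-1)}$ for $s>1$, which is where $\alpha>0$ enters) are both carried out correctly. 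What your writeup buys beyond the citation is precision: your closing observation is not a hedge but a genuine correction, since the lemma as printed is literally false as a seminorm equivalence --- for $f(z)=z$ and $m\ge 2$ one has $\|f\|_\alpha>0$ while $f^{(m)}\equiv 0$, so $\|f\|_\alpha\asymp\sup_{z\in\mathbb{D}}(1-|z|^2)^{\alpha+m-1}|f^{(m)}(z)|$ cannot hold with constants independent of $f$. The honest statement, which is the one actually proved in \cite{Zhu3} and the one your argument yields, is $\|f\|_{\mathcal{B}_\alpha}\asymp\sum_{j=0}^{m-1}|f^{(j)}(0)|+\sup_{z\in\mathbb{D}}(1-|z|^2)^{\alpha+m-1}|f^{(m)}(z)|$, together with the unconditional equivalence of finiteness $f\in\mathcal{B}^\alpha\Leftrightarrow\sup_{z\in\mathbb{D}}(1-|z|^2)^{\alpha+m-1}|f^{(m)}(z)|<\infty$. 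This imprecision is not harmless downstream: for instance the claim in Section 4 that $\|S_\alpha f\|_{H_{\nu_{\alpha+1}}}\asymp\|f\|_{\mathcal{B}_\alpha}$ for $f\in\tilde{\mathcal{B}}^\alpha$ also fails for $f(z)=z$ (the normalization $f(0)=0$ does not kill $f'(0)$), so recording the lemma with the point-evaluation terms, as you do, is the right call and is what the later essential-norm estimates genuinely rely on.
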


Hence, when $f\in \mathcal{B}^\alpha,$ we have that \begin{eqnarray}\|f\|_{\alpha}\asymp \sup\limits_{z\in \mathbb{D}}(1-|z|)^{\alpha+m-1}|f^{(m)}(z)|<\infty.\label{2.1}\end{eqnarray}
where $f^{(m)}$ denotes the $m-$th order  derivative of $f\in H(\mathbb{D}).$
\begin{Lemma}\cite{OSZ,Zhu3} For $\alpha>0,\;f\in \mathcal{B}^\alpha,$ then we have that
\begin{eqnarray*}|f(z)|\leq C
\left\{
  \begin{array}{ll}
    \|f\|_{\mathcal{B}_\alpha}, & 0<\alpha<1; \\
    \|f\|_{\mathcal{B}_\alpha}\log\frac{e}{1-|z|^2}, &\alpha=1; \\
   \frac{1}{(1-|z|^2)^{\alpha-1}}\|f\|_{\mathcal{B}_\alpha}, & \alpha>1.
  \end{array}
\right.
\end{eqnarray*}for some $C$ independent of $f$. \end{Lemma}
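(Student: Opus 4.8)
The plan is to recover $f$ from its derivative by integrating along the radial segment from the origin to $z$, and then to insert the pointwise bound on $|f'|$ that is built into the definition of $\|f\|_\alpha$. Writing $z=|z|e^{i\theta}$ and integrating $f'$ over the path $s\mapsto se^{i\theta}$, $s\in[0,|z|]$, gives
\[ |f(z)|\le |f(0)|+\int_0^{|z|}|f'(se^{i\theta})|\,ds. \]
From the seminorm one has $(1-|w|^2)^\alpha|f'(w)|\le\|f\|_\alpha$ for every $w\in\mathbb{D}$, so $|f'(se^{i\theta})|\le\|f\|_\alpha(1-s^2)^{-\alpha}$; using $1-s^2\ge 1-s$ on $[0,1)$ reduces everything to the elementary integral
\[ |f(z)|\le |f(0)|+\|f\|_\alpha\int_0^{|z|}\frac{ds}{(1-s)^\alpha}. \]

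Next I would evaluate this integral in the three regimes. For $0<\alpha<1$ it equals $(1-\alpha)^{-1}\bigl(1-(1-|z|)^{1-\alpha}\bigr)\le(1-\alpha)^{-1}$, a constant, which together with $|f(0)|\le\|f\|_{\mathcal{B}_\alpha}$ yields the bound $C\|f\|_{\mathcal{B}_\alpha}$. For $\alpha=1$ the integral is $\log\frac{1}{1-|z|}$, and since $1+|z|<e$ we have $\frac{1}{1-|z|}\le\frac{e}{1-|z|^2}$, giving the logarithmic estimate (the term $|f(0)|$ is absorbed because $\log\frac{e}{1-|z|^2}\ge 1$). For $\alpha>1$ the integral is at most $\frac{1}{(\alpha-1)(1-|z|)^{\alpha-1}}$, and rewriting $(1-|z|)^{-(\alpha-1)}=(1+|z|)^{\alpha-1}(1-|z|^2)^{-(\alpha-1)}\le 2^{\alpha-1}(1-|z|^2)^{-(\alpha-1)}$ produces the stated bound.

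There is no real obstacle here; the single point requiring care is keeping the constant $C$ independent of $f$ and passing between powers of $1-|z|$ and of $1-|z|^2$, both of which follow from the trivial inequalities $1-s\le 1-s^2$ and $1\le 1+|z|<2$. Collecting the three cases gives the claimed growth estimate.
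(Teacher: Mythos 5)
Your proof is correct. The paper offers no argument of its own for this lemma---it simply cites \cite{OSZ,Zhu3}---and your radial-integration argument (bound $|f'|$ by the seminorm, integrate $\int_0^{|z|}(1-s)^{-\alpha}\,ds$, and split into the three regimes $\alpha<1$, $\alpha=1$, $\alpha>1$) is precisely the standard proof found in those references, with all constants and the passage between $1-|z|$ and $1-|z|^2$ handled correctly.
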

The following lemma is a special case of \cite[Lemma 6]{SCZ}.
\begin{Lemma} For $0<\alpha<1$ and $\{f_k\} $ is an arbitrary bounded sequence in $\mathcal{B}^\alpha$ converging to 0 uniformly on the compact subsets of $\mathbb{D}$ as $k\rightarrow \infty,$ then we have that
$$\lim\limits_{k\rightarrow \infty} \sup\limits_{z\in \mathbb{D}} |f_k(z)|=0.$$ \end{Lemma}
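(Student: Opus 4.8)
The plan is to estimate $\sup_{z\in\mathbb{D}}|f_k(z)|$ by splitting $\mathbb{D}$ into a compact core $\{|z|\le r\}$, where the hypothesis of uniform convergence on compact subsets does the work, and a boundary annulus $\{|z|>r\}$, where the uniform bound on the Bloch-type norm controls the size. The restriction $0<\alpha<1$ will be exactly what makes the boundary contribution uniformly small. First I would record the uniform bound: since $\{f_k\}$ is bounded in $\mathcal{B}^\alpha$, set $M:=\sup_k\|f_k\|_{\mathcal{B}_\alpha}<\infty$, so that directly from the definition of $\|\cdot\|_\alpha$ one has $|f_k'(w)|\le M(1-|w|^2)^{-\alpha}$ for every $w\in\mathbb{D}$ and every $k$.

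Next, for fixed $z\in\mathbb{D}$ with $|z|>r$, write $\zeta=z/|z|$ and integrate the derivative along the radial segment from $r\zeta$ to $z$, so that
$$f_k(z)-f_k(r\zeta)=\int_r^{|z|}f_k'(s\zeta)\,\zeta\,ds.$$
Taking absolute values and inserting the derivative bound gives
$$|f_k(z)|\le \sup_{|w|\le r}|f_k(w)|+M\int_r^{1}\frac{ds}{(1-s^2)^\alpha}.$$
Here is the crucial point: because $0<\alpha<1$, the integral $\int_0^1(1-s^2)^{-\alpha}\,ds$ converges, so given $\varepsilon>0$ I can fix $r\in(0,1)$ with $M\int_r^1(1-s^2)^{-\alpha}\,ds<\varepsilon/2$, and this bound is independent of $k$.

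Finally, the closed disk $\{|w|\le r\}$ is compact, so the hypothesis yields a $K$ with $\sup_{|w|\le r}|f_k(w)|<\varepsilon/2$ for all $k\ge K$. Combining the two regimes, for $k\ge K$ we obtain $|f_k(z)|<\varepsilon$ both when $|z|\le r$ and when $|z|>r$, hence $\sup_{z\in\mathbb{D}}|f_k(z)|<\varepsilon$; since $\varepsilon$ is arbitrary, the claim follows. The only step requiring care — and the place where the hypothesis $\alpha<1$ is indispensable — is the uniform (in $k$) smallness of the tail integral; for $\alpha\ge 1$ that integral diverges and the argument breaks down, which is consistent with the fact that $\mathcal{B}^\alpha\not\subset H^\infty$ in that range (cf. Lemma 2.4).
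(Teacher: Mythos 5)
Your proof is correct in every step, and the order of quantifiers is handled properly: you fix $r$ first, depending only on $\varepsilon$ and the uniform bound $M$, and only then choose $K$ using uniform convergence on the compact disk $\{|w|\le r\}$. Note, however, that the paper does not actually prove this lemma: it is quoted without argument as a special case of Lemma 6 of the polydisc paper of Stevi\'{c}, Chen and Zhou (reference [SCZ]), so there is no internal proof to compare against. Your radial-integration argument --- the derivative bound $|f_k'(s\zeta)|\le M(1-s^2)^{-\alpha}$, the convergence of $\int_0^1(1-s^2)^{-\alpha}\,ds$ precisely because $\alpha<1$, and the decomposition of $\mathbb{D}$ into the compact core $\{|z|\le r\}$ and the annulus $\{|z|>r\}$ --- is the standard one-variable proof of this fact, and it is essentially the mechanism underlying the cited polydisc lemma as well as the inclusion $\mathcal{B}^\alpha\subset H^\infty$ for $0<\alpha<1$ stated in the introduction. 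What your version buys is a short, self-contained justification inside the paper; what the citation buys is only brevity (and a more general ambient setting that is not needed here). Your closing observation that the tail integral diverges when $\alpha\ge 1$, consistent with $\mathcal{B}^\alpha\not\subset H^\infty$ in that range, correctly pinpoints why the hypothesis $0<\alpha<1$ is genuinely necessary rather than a convenience of the method.
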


The following criterion for compactness follows from an easy modification  of the Proposition 3.11 of \cite{CM}. Hence we omit the details.
\begin{Lemma}  Suppose $ X$ and $Y$ are two Banach spaces. Then the weighted composition operator $uC_\varphi: X\rightarrow Y$ is compact if whenever $\{f_k\}$ is bounded in $X$ and $f_k\rightarrow 0$ uniformly on compact subsets of $\mathbb{D}$, then $uC_\varphi f_k\rightarrow0$ in $Y$ as $k\rightarrow \infty.$
\end{Lemma}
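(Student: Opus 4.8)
The plan is to deduce compactness of $uC_\varphi\colon X\to Y$ from the standard characterization that a bounded linear operator between Banach spaces is compact precisely when it sends bounded sequences to sequences possessing norm-convergent subsequences. Accordingly, I would begin with an arbitrary sequence $\{g_k\}$ bounded in $X$, say $\sup_k\|g_k\|_X\leq M$, and aim to extract a subsequence $\{g_{k_j}\}$ for which $\{uC_\varphi g_{k_j}\}$ converges in the norm of $Y$.

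The engine of the argument is that, for the concrete domain space at hand $X=\mathcal{B}^\alpha$, a norm-bounded set is locally uniformly bounded on $\mathbb{D}$: this is exactly the content of the growth estimates in Lemma 2.4, which bound $|g_k(z)|$ on each compact subset of $\mathbb{D}$ uniformly in $k$. Hence $\{g_k\}$ is a normal family, and by Montel's theorem some subsequence $\{g_{k_j}\}$ converges uniformly on compact subsets of $\mathbb{D}$ to a holomorphic function $f$. By lower semicontinuity of the seminorm under locally uniform convergence, one checks $f\in X$: since $g_{k_j}'\to f'$ locally uniformly, for each fixed $z$ one has $(1-|z|^2)^\alpha|f'(z)|=\lim_j (1-|z|^2)^\alpha|g_{k_j}'(z)|\leq M$, and taking the supremum over $z$ together with $|f(0)|=\lim_j|g_{k_j}(0)|$ shows $\|f\|_{\mathcal{B}_\alpha}\leq M$.

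Then I would set $f_j:=g_{k_j}-f$. This sequence is bounded in $X$, with $\|f_j\|_X\leq 2M$, and it converges to $0$ uniformly on compact subsets of $\mathbb{D}$. The hypothesis of the lemma now applies verbatim and yields $uC_\varphi f_j\to 0$ in $Y$, that is, $uC_\varphi g_{k_j}\to uC_\varphi f$ in the norm of $Y$. Since $f\in X$ we have $uC_\varphi f\in Y$, so the limit point is legitimate. Thus every bounded sequence in $X$ admits a subsequence whose image under $uC_\varphi$ converges in $Y$, which establishes that $uC_\varphi$ is compact.

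The only genuine obstacle is the normal-family step: one must know that norm-boundedness in $X$ forces a locally uniformly bounded family, so that Montel's theorem applies, and that the compact-open limit $f$ again lies in $X$. For $X=\mathcal{B}^\alpha$ both facts follow from Lemma 2.4 together with the closedness of $\mathcal{B}^\alpha$ under locally uniform limits; for abstract Banach spaces $X,Y$ the statement is understood with this Montel property built into the setting, which is precisely the ``easy modification'' of Proposition 3.11 of \cite{CM} referred to by the authors.
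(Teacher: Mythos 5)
Your proposal is correct, and it is essentially the argument the paper intends: the paper omits the proof by appealing to Proposition 3.11 of \cite{CM}, whose content is exactly your normal-families scheme (extract a locally uniformly convergent subsequence from a bounded sequence via Montel and the growth estimates of Lemma 2.4, verify the limit stays in $\mathcal{B}^\alpha$, subtract it, and apply the hypothesis to conclude norm convergence of the images). Your closing remark correctly identifies that for abstract $X$, $Y$ the statement must be read with this Montel-type property assumed, which is precisely the ``easy modification'' the authors invoke.
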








\section{Boundedness }
In this section, we give some new characterizations for  the boundedness of  $uC_\varphi: \mathcal{B}^\alpha\rightarrow \mathcal{Z}_\beta$ in three cases.
\begin{thm} If $0<\alpha<1$, then $uC_\varphi$ maps $\mathcal{B}^\alpha$ boundedly into $\mathcal{Z}_\beta$ if and only if $u\in \mathcal{Z}_\beta$
and
\begin{eqnarray} \sup\limits_{z\in\mathbb{D}} \frac{(1-|z|^2)^\beta |2u'(z)\varphi'(z)+u(z)\varphi''(z)|}{(1-|\varphi(z)|^2)^\alpha}\asymp \sup\limits_{n\geq 0}(n+1)^\alpha \|(2u'\varphi'+u\varphi'')\varphi^n\|_{\nu_\beta}<\infty.\;\;\;\label{3.1}
 \end{eqnarray}
\begin{eqnarray} \sup\limits_{z\in\mathbb{D}}\frac{(1-|z|^2)^\beta|u(z)\varphi'(z)^2|}{(1-|\varphi(z)|^2)^{\alpha+1}}\asymp \sup\limits_{n\geq 0} \|u(\varphi')^2\varphi^n\|_{\nu_\beta}(n+1)^{\alpha+1}<\infty.\;\;\;\label{3.2}  \end{eqnarray}
\end{thm}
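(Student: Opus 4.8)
The plan is to split the equivalence into an essentially formal reduction followed by the two genuine implications. First I would note that the asymptotic equivalences $\asymp$ inside \eqref{3.1} and \eqref{3.2} are free of analytic content: they are instances of Lemmas 2.1 and 2.2. Applying Lemma 2.1(i) to the weighted composition operator with multiplier $u(\varphi')^2$ from $H^\infty_{\nu_{\alpha+1}}$ into $H^\infty_{\nu_\beta}$ (both weights are radial, non-increasing and vanish at the boundary, and $\tilde\nu_{\alpha+1}=\nu_{\alpha+1}$) identifies the left-hand supremum of \eqref{3.2} with $\sup_{n}\|u(\varphi')^2\varphi^n\|_{\nu_\beta}/\|z^n\|_{\nu_{\alpha+1}}$, and Lemma 2.2(i) converts $\|z^n\|_{\nu_{\alpha+1}}^{-1}$ into the factor $\asymp(n+1)^{\alpha+1}$; the multiplier $2u'\varphi'+u\varphi''$ with weight $\nu_\alpha$ gives \eqref{3.1} in the same way. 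It therefore suffices to prove that $uC_\varphi$ is bounded if and only if $u\in\mathcal{Z}_\beta$ and the two left-hand suprema, denoted $M_1,M_2$, are finite. The whole argument rests on the identity
\begin{eqnarray*}
(uC_\varphi f)''(z)=u''(z)f(\varphi(z))+\bigl(2u'(z)\varphi'(z)+u(z)\varphi''(z)\bigr)f'(\varphi(z))+u(z)\varphi'(z)^2 f''(\varphi(z)),
\end{eqnarray*}
obtained by differentiating $u\cdot(f\circ\varphi)$ twice.

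For sufficiency I would bound $(1-|z|^2)^\beta|(uC_\varphi f)''(z)|$ termwise. Since $0<\alpha<1$, Lemma 2.4 gives $|f(\varphi(z))|\le C\|f\|_{\mathcal{B}_\alpha}$, so the first term is $\le\|u\|_{\mathcal{Z}_\beta}C\|f\|_{\mathcal{B}_\alpha}$. The $m=1$ and $m=2$ cases of Lemma 2.3, evaluated at $\varphi(z)$, give $|f'(\varphi(z))|\le C\|f\|_{\mathcal{B}_\alpha}(1-|\varphi(z)|^2)^{-\alpha}$ and $|f''(\varphi(z))|\le C\|f\|_{\mathcal{B}_\alpha}(1-|\varphi(z)|^2)^{-\alpha-1}$, so the remaining two terms are controlled by $M_1\|f\|_{\mathcal{B}_\alpha}$ and $M_2\|f\|_{\mathcal{B}_\alpha}$; the quantities $(uC_\varphi f)(0)$ and $(uC_\varphi f)'(0)$ are bounded by the same pointwise estimates, giving $\|uC_\varphi f\|_{\mathcal{Z}_\beta}\le C(\|u\|_{\mathcal{Z}_\beta}+M_1+M_2)\|f\|_{\mathcal{B}_\alpha}$.

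For necessity I would first feed the polynomials $1,z,z^2\in\mathcal{B}^\alpha$ into the bounded operator. The constant $1$ yields $u=uC_\varphi 1\in\mathcal{Z}_\beta$; then $z$ and $z^2$, combined with $u\in\mathcal{Z}_\beta$ and $|\varphi|\le 1$, force the \emph{numerators} $\sup_z(1-|z|^2)^\beta|2u'\varphi'+u\varphi''|$ and $\sup_z(1-|z|^2)^\beta|u\varphi'^2|$ to be finite (one peels off the $u''\varphi$ and $u''\varphi^2$ pieces using $u\in\mathcal{Z}_\beta$). These handle the contribution to $M_1,M_2$ from the region $\{|\varphi(z)|\le\delta\}$, where the denominators are bounded below. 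On $\{|\varphi(z)|\ge\delta\}$ I would use the family $f_{a,j}(z)=(1-|a|^2)^j(1-\bar a z)^{-(\alpha+j-1)}$, $j=1,2,3$, which lie in $\mathcal{B}^\alpha$ with norms bounded uniformly in $a$ by the standard estimate $\sup_z(1-|z|^2)^\alpha|1-\bar a z|^{-(\alpha+j)}\asymp(1-|a|^2)^{-j}$. Writing $a=\varphi(z_0)$ one computes $f_{a,j}(a)=(1-|a|^2)^{1-\alpha}$, $f_{a,j}'(a)=(\alpha+j-1)\bar a(1-|a|^2)^{-\alpha}$ and $f_{a,j}''(a)=(\alpha+j-1)(\alpha+j)\bar a^2(1-|a|^2)^{-\alpha-1}$; because the dependence on $j$ is through the distinct numbers $\alpha+j-1$, fixed coefficients $c_j$ (solving a $3\times3$ Vandermonde system independent of $a$) produce a combination $F_a=\sum_j c_j f_{a,j}$ with $F_a(a)=F_a'(a)=0$, $F_a''(a)=\bar a^2(1-|a|^2)^{-\alpha-1}$, and likewise $G_a$ with $G_a(a)=G_a''(a)=0$, $G_a'(a)=\bar a(1-|a|^2)^{-\alpha}$. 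Substituting $F_a$ and $G_a$ into the derivative identity at $z_0$ collapses the right side to a single surviving term, and multiplying by $(1-|z_0|^2)^\beta$, using $\|uC_\varphi F_a\|_{\mathcal{Z}_\beta},\|uC_\varphi G_a\|_{\mathcal{Z}_\beta}\le C\|uC_\varphi\|$ and $|a|\ge\delta$, yields the bounds for $M_2$ and $M_1$ on this region.

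The step I expect to be the main obstacle is this test-function bookkeeping in the necessity proof: verifying the uniform Bloch bound for $f_{a,j}$, checking that the Vandermonde coefficients $c_j$ are genuinely independent of $a$, and---most delicate---choosing the cut $\{|\varphi|=\delta\}$ so as to neutralize the degeneracy of the factors $\bar a^2$ and $\bar a$ as $a\to0$, which is precisely what makes the separate polynomial argument necessary. Everything else is the routine pointwise estimation already encoded in Lemmas 2.3 and 2.4.
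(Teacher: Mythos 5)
Your proposal is correct and takes essentially the same route as the paper's proof: the same second-derivative identity with Lemmas 2.3--2.4 for sufficiency, the polynomials $1,z,z^2$ plus conformal-type test functions that annihilate two of the three terms in the identity for necessity, and Lemma 2.1(i) combined with Lemma 2.2(i) to pass between the function-theoretic suprema and the $n$-th power suprema. The only cosmetic differences are that the paper's test functions $f_w,g_w$ are two-power combinations with an added constant rather than your Vandermonde combination of three powers $f_{a,j}$, and that the paper leaves the splitting into $\{|\varphi(z)|\le\delta\}$ and $\{|\varphi(z)|\ge\delta\}$ implicit where you spell it out.
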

\begin{proof}Sufficiency. Suppose $u\in \mathcal{Z}_\beta$, (\ref{3.1}) and (\ref{3.2}) hold. Since $(uC_\varphi f)''=u'' C_\varphi f+(2u'
\varphi'+u\varphi'') C_\varphi f' +u(\varphi')^2 C_\varphi f''$, using Lemma 2.3 and Lemma 2.4,  for any $f\in \mathcal{B}^\alpha$,
\begin{eqnarray*}  & &(1-|z|^2)^\beta |(u C_\varphi f)''(z)\\
&\leq &  (1-|z|^2)^\beta| u''(z)| |f(\varphi(z))|+(1-|z|^2)^\beta |u(z)(\varphi'(z))^2| |f''(\varphi(z))|\\ & &+(1-|z|^2)^\beta |2u'(z)\varphi'(z)+u(z)\varphi''(z)| |f'(\varphi(z))| \\ &\preceq& \|u\|_{\mathcal{Z}_\beta}\|f\|_{\mathcal{B}_\alpha} + \frac{(1-|z|^2)^\beta |u(z)(\varphi'(z))^2|}{(1-|\varphi(z)|^2)^{\alpha+1}}\|f\|_{\mathcal{B}_\alpha}\\ &&+ \frac{(1-|z|^2)^\beta |2u'(z)\varphi'(z)+u(z)\varphi''(z)|}{(1-|\varphi(z)|^2)^{\alpha}}\|f\|_{\mathcal{B}_\alpha}<\infty,
  \end{eqnarray*}
and $|(uC_\varphi f)'(0)|\preceq |u'(0)|\|f\|_{\mathcal{B}_\alpha}+\frac{|u(0)\varphi'(0)|}{(1-|\varphi(0)|^2)^\alpha}\|f\|_{\mathcal{B}_\alpha},\;\; |uC_\varphi f(0)|\preceq \|f\|_{\mathcal{B}_\alpha}.$
From above it follows  the boundedness of $uC_\varphi: \mathcal{B}^\alpha\rightarrow \mathcal{Z}_\beta.$

Necessity. Suppose $uC_\varphi: \mathcal{B}^\alpha\rightarrow \mathcal{Z}_\beta$ is bounded
 for $0<\alpha<1$. Then choose the functions $f(z)=1,\;f(z)=z,\; f(z)=z^2,$  and  for a fixed $w\in \mathbb{D}$, take
\begin{eqnarray*} &&g_w(z)=\frac{1-|w|^2}{(1- \bar{w}z)^\alpha}-\frac{\alpha(1-|w|^2)^2}{(\alpha+2)(1- \bar{w}z)^{\alpha+1}}-\frac{2}{(\alpha+2)(1-|w|^2)^{\alpha-1}},\\&&
f_w(z)=\frac{1-|w|^2}{(1- \bar{w}z)^\alpha}-\frac{\alpha(1-|w|^2)^2}{(\alpha+1)(1- \bar{w}z)^{\alpha+1}}-\frac{1}{(\alpha+1)(1-|w|^2)^{\alpha-1}}, \end{eqnarray*}
  Then it follows that   $u\in \mathcal{Z}_\beta$,  and
   \begin{eqnarray*}&& \sup\limits_{z\in \mathbb{D}} (1-|z|^2)^\beta |2u'(z)\varphi'(z)+u(z)\varphi''(z)|<\infty,
   \nonumber\\&& \sup\limits_{z\in \mathbb{D}}(1-|z|^2)^\beta|u(z)\varphi'(z)^2|<\infty. \end{eqnarray*}
  \begin{eqnarray*} &&\sup\limits_{z\in \mathbb{D}} \frac{(1-|z|^2)^\beta |u(z)\varphi'(z)^2||\varphi(z)|^2}{(1-|\varphi(z)|^2)^{\alpha+1}}<\infty,\\&&
  \sup\limits_{z\in \mathbb{D}} \frac{(1-|z|^2)^\beta |2u'(z)\varphi'(z)+u(z)\varphi''(z)||\varphi(z)|}{(1-|\varphi(z)|^2)^\alpha}<\infty.
  \end{eqnarray*}
  From the above four inequalities, we obtain that
\begin{eqnarray} M_1:=\sup\limits_{z\in\mathbb{D}} \frac{(1-|z|^2)^\beta |2u'(z)\varphi'(z)+u(z)\varphi''(z)|}{(1-|\varphi(z)|^2)^\alpha}<\infty,\label{3.3}\end{eqnarray}
and
\begin{eqnarray}M_2:=\sup\limits_{z\in\mathbb{D}}\frac{(1-|z|^2)^\beta|u(z)\varphi'(z)^2|}{(1-|\varphi(z)|^2)^{\alpha+1}}<\infty.\label{3.4} \end{eqnarray}
Then employing  Lemma 2.1 (i) with $\nu=\nu_\alpha,\; w(z)=\nu_\beta$, Lemma 2.2 (i), (\ref{3.3}) and (\ref{3.4}),  it follows  that
\begin{eqnarray*}M_1 &\asymp& \sup\limits_{n\geq 0}\frac{\|(2u'\varphi'+u\varphi'')\varphi^n\|_{\nu_\beta}(n+1)^\alpha}{\|z^n\|_{\nu_\alpha}(n+1)^\alpha} \\&\asymp& \sup\limits_{n\geq 0}(n+1)^\alpha \|(2u'\varphi'+u\varphi'')\varphi^n\|_{\nu_\beta},   \end{eqnarray*}
\begin{eqnarray*}M_2 &\asymp& \sup\limits_{n\geq 0} \frac{\|u(\varphi')^2\varphi^n\|_{\nu_\beta}(n+1)^{\alpha+1}}{\|z^n\|_{\nu^{\alpha+1}}(n+1)^{\alpha+1}} \\&\asymp& \sup\limits_{n\geq 0}\|u(\varphi')^2\varphi^n\|_{\nu_\beta}(n+1)^{\alpha+1}.   \end{eqnarray*}
From the above inequality we obtain (\ref{3.1}) and (\ref{3.2}).
This completes the proof of the theorem.
 \end{proof}
\begin{thm} If $\alpha=1$, then $uC_\varphi$ maps $\mathcal{B} $ boundedly into $\mathcal{Z}_\beta$ if and only if (\ref{3.1}) and (\ref{3.2}) hold
and
\begin{eqnarray}\sup\limits_{n\geq 0} (\log n) \|u''(z) \varphi^n\|_{\nu_\beta}\asymp\sup\limits_{z\in \mathbb{D}}(1-|z|^2)^\beta |u''(z)|\log\frac{e}{1-|\varphi(z)|^2}<\infty.\;\;\;\;\label{3.5}\end{eqnarray}
\end{thm}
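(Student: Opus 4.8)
The plan is to follow the same sufficiency/necessity scheme used for Theorem~3.1, the one genuinely new feature being that for $\alpha=1$ a Bloch function is no longer bounded but only grows logarithmically (the middle case of Lemma~2.4). Consequently the term carrying $u''$ in the second derivative of $uC_\varphi f$ must now be controlled by the extra quantity appearing in (\ref{3.5}), rather than by the plain hypothesis $u\in\mathcal Z_\beta$.

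For sufficiency I would start, exactly as before, from
$$(uC_\varphi f)''=u''\,(f\circ\varphi)+(2u'\varphi'+u\varphi'')\,(f'\circ\varphi)+u(\varphi')^2\,(f''\circ\varphi).$$
For $f\in\mathcal B=\mathcal B^1$ I would bound the three factors $|f(\varphi(z))|$, $|f'(\varphi(z))|$, $|f''(\varphi(z))|$ by $\log\frac{e}{1-|\varphi(z)|^2}$, $(1-|\varphi(z)|^2)^{-1}$ and $(1-|\varphi(z)|^2)^{-2}$ respectively, using the $\alpha=1$ case of Lemma~2.4 and Lemma~2.3 with $m=1,2$, each times $\|f\|_{\mathcal B}$. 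Multiplying by $(1-|z|^2)^\beta$ and taking the supremum, the three terms become precisely the left-hand sides of (\ref{3.5}), (\ref{3.1}) and (\ref{3.2}), so each is finite and $\|uC_\varphi f\|_{\mathcal Z_\beta}\preceq\|f\|_{\mathcal B}$ after also estimating the $f(0),f'(0)$ contributions. I would also remark that (\ref{3.5}) forces $u\in\mathcal Z_\beta$, since $\log\frac{e}{1-|\varphi(z)|^2}\geq1$; this is why no separate hypothesis on $u$ is listed.

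For necessity, assuming $uC_\varphi:\mathcal B\to\mathcal Z_\beta$ bounded, I would produce, for each $w\in\mathbb D$, uniformly $\mathcal B$-bounded test functions isolating each term. The finiteness of the suprema in (\ref{3.1}) and (\ref{3.2}) is obtained essentially as in Theorem~3.1: the functions $f_w,g_w$ defined there remain in $\mathcal B$ with uniformly bounded norm when $\alpha=1$, and in fact satisfy $f_w(w)=g_w(w)=0$, so the unbounded value term causes no trouble and the same four-inequality argument peels off the $f'$ and $f''$ contributions. The new ingredient is (\ref{3.5}): I need a family $\{h_w\}\subset\mathcal B$ with $\sup_w\|h_w\|_{\mathcal B}\preceq1$ for which $h_w(w)\asymp\log\frac{e}{1-|w|^2}$ while $h_w'(w)=h_w''(w)=0$. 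Evaluating $uC_\varphi h_w$ with $w=\varphi(z)$ then annihilates the two derivative terms, and boundedness yields $(1-|z|^2)^\beta|u''(z)|\log\frac{e}{1-|\varphi(z)|^2}\preceq\|uC_\varphi\|$; taking the supremum over $z$ gives the finiteness in (\ref{3.5}). Constructing this logarithmically growing family with two vanishing derivatives---most naturally as a linear combination of $\big(\log\frac{e}{1-\bar wz}\big)^k$, $k=1,2,3$, with coefficients chosen to annihilate $h_w'(w)$ and $h_w''(w)$, in the spirit of Esmaeili and Lindstr\"om---is the main obstacle, since the criticality of $\mathcal B$ is exactly what makes this case delicate.

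Finally, the equivalences ``$\asymp$'' in all three displays are not analytic but follow from the weighted-$H^\infty$ lemmas. Applying Lemma~2.1(i) with holomorphic multiplier $u''$, inner weight $\nu_{\log}$ and outer weight $\nu_\beta$ gives
$$\sup_{z\in\mathbb D}(1-|z|^2)^\beta|u''(z)|\log\tfrac{e}{1-|\varphi(z)|^2}\asymp\sup_{n\geq0}\frac{\|u''\varphi^n\|_{\nu_\beta}}{\|z^n\|_{\nu_{\log}}},$$
and then Lemma~2.2(ii), namely $\|z^n\|_{\nu_{\log}}\asymp1/\log n$, converts the right-hand side into $\sup_{n\geq0}(\log n)\|u''\varphi^n\|_{\nu_\beta}$, which is (\ref{3.5}). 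The corresponding passages for (\ref{3.1}) and (\ref{3.2}) use Lemma~2.1(i) with multipliers $2u'\varphi'+u\varphi''$ and $u(\varphi')^2$ and inner weights $\nu_1,\nu_2$ together with Lemma~2.2(i), exactly as in Theorem~3.1. This completes the plan.
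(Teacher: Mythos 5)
Your proposal is correct and follows essentially the same route as the paper: the same decomposition of $(uC_\varphi f)''$ for sufficiency (with Lemma 2.4 in the case $\alpha=1$ supplying the logarithmic bound, so that (\ref{3.5}) replaces the hypothesis $u\in\mathcal{Z}_\beta$); necessity of (\ref{3.1})--(\ref{3.2}) by re-running the Theorem 3.1 argument, where your observation that $f_w(w)=g_w(w)=0$ is precisely what makes that argument survive the unboundedness of Bloch functions; and the conversion to the sequence form via Lemma 2.1(i) with weights $\nu_{\log}$, $\nu_\beta$ and Lemma 2.2(ii).

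The one point where you diverge is the step you flag as ``the main obstacle,'' and it is worth knowing that the paper sidesteps it rather than solving it. The paper's test function is $h_w(z)=\frac{6}{a}\left(\log\frac{2}{1-\bar{w}z}\right)^2-\frac{2}{a^2}\left(\log\frac{2}{1-\bar{w}z}\right)^3$ with $a=\log\frac{2}{1-|w|^2}$, which does \emph{not} have vanishing derivatives at $w$: one computes $h_w(w)=4a$, $|h_w'(w)|\preceq (1-|w|^2)^{-1}$ and $|h_w''(w)|\preceq (1-|w|^2)^{-2}$. Instead of annihilating the derivative terms, the paper absorbs them by the triangle inequality using the already-established (\ref{3.1}) and (\ref{3.2}) (with $\alpha=1$), leaving $(1-|z|^2)^\beta|u''(z)|\log\frac{e}{1-|\varphi(z)|^2}\preceq C$. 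Your stricter plan also closes, so there is no genuine gap: in your three-term family $c_1L+c_2L^2/a+c_3L^3/a^2$ with $L=\log\frac{2}{1-\bar{w}z}$, the conditions $h_w'(w)=h_w''(w)=0$ reduce to the linear system $c_1+2c_2+3c_3=0$ and $c_2+3c_3=0$, solved by $(c_1,c_2,c_3)=(3,-3,1)$, which yields $h_w(w)=a$ and a uniformly bounded Bloch norm. So both arguments work; the paper's absorption trick is simply the cheaper way around the criticality of $\mathcal{B}$, since it needs only a two-term combination and no exact cancellation.
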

\begin{proof}
Sufficiency.  This part is similar to the proof in Theorem 3.1.

Necessity. Suppose that $uC_\varphi$ maps $\mathcal{B} $ boundedly into $\mathcal{Z}_\beta$.  Similar to the proof in Theorem 3.1. we can obtain (\ref{3.1}) and (\ref{3.2}) with $\alpha=1$, thus we  need only to show (\ref{3.5}). In this case, we choose the function
\begin{eqnarray*}h_w(z)=\frac{6}{a}\left(\log\frac{2}{1-\bar{w}z}\right)^2 -\frac{2}{a^2}\left( \log\frac{2}{1-\bar{w}z}\right)^3,\end{eqnarray*} where $a=\log\frac{2}{1-|w|^2},$ then by (\ref{3.1}) and (\ref{3.2}), we can easily obtain that \begin{eqnarray*}M_3:= \sup\limits_{z\in \mathbb{D}}(1-|z|^2)^\beta |u''(z)|\log\frac{e}{1-|\varphi(z)|^2}<\infty.\end{eqnarray*}
Then by Lemma 2.1 (i) with $\nu=\Big(\log \Big(\frac{e}{1-|z|^2}\Big)\Big)^{-1}=\nu_{\log},\; w(z)=\nu_\beta$, and Lemma 2.2 (ii),
\begin{eqnarray*} M_3&=&\sup\limits_{z\in \mathbb{D}} \frac{(1-|z|^2)^\beta|u''(z)|}{\left(\log\frac{e}{1-|\varphi(z)|^2}\right)^{-1}}\asymp
\sup\limits_{n\geq 0}\frac{\|u''\varphi^n\|_{\nu_\beta}(\log n)}{\|z^n\|_{\nu_{\log}}(\log n)}\\&\asymp& \sup\limits_{n\geq 0}(\log n ) \|u''\varphi^n\|_{\nu_\beta}<\infty. \end{eqnarray*}
from which (\ref{3.5}) follows. This completes the proof.
 \end{proof}

\begin{thm} If $\alpha>1$, then $uC_\varphi$ maps $\mathcal{B}^\alpha$ boundedly into $\mathcal{Z}_\beta$ if and only if (\ref{3.1}) and (\ref{3.2}) hold and \begin{eqnarray}\sup\limits_{n\geq 0}(n+1)^{\alpha-1} \|u''\varphi^n\|_{\nu_\beta} \asymp \sup\limits_{z\in\mathbb{D}} \frac{(1-|z|^2)^\beta|u''(z)|}{(1-|\varphi(z)|^2)^{\alpha-1}}<\infty. \label{3.6} \end{eqnarray}
\end{thm}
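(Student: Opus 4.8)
The plan is to follow the pattern of Theorems 3.1 and 3.2, based on the pointwise identity
$$(uC_\varphi f)''=u''\,C_\varphi f+(2u'\varphi'+u\varphi'')\,C_\varphi f'+u(\varphi')^2\,C_\varphi f'',$$
but now invoking the case $\alpha>1$ of Lemma 2.4. For sufficiency I would assume $u\in\mathcal Z_\beta$ together with (\ref{3.1}), (\ref{3.2}) and (\ref{3.6}), and estimate $(1-|z|^2)^\beta|(uC_\varphi f)''(z)|$ term by term. Lemma 2.3 with $m=1,2$ gives $|f'(\varphi(z))|\preceq\|f\|_{\mathcal B_\alpha}(1-|\varphi(z)|^2)^{-\alpha}$ and $|f''(\varphi(z))|\preceq\|f\|_{\mathcal B_\alpha}(1-|\varphi(z)|^2)^{-\alpha-1}$, so the last two terms are dominated by the left sides of (\ref{3.1}) and (\ref{3.2}). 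The genuinely new term is the first, where the $\alpha>1$ growth estimate $|f(\varphi(z))|\preceq\|f\|_{\mathcal B_\alpha}(1-|\varphi(z)|^2)^{-(\alpha-1)}$ produces precisely the quantity controlled by (\ref{3.6}); together with the easy estimates at the origin this proves boundedness. This direction is routine.

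For necessity I would first test on $f=1,z,z^2$ to get $u\in\mathcal Z_\beta$ and the ``numerator only'' bounds $\sup_z(1-|z|^2)^\beta|2u'\varphi'+u\varphi''|<\infty$ and $\sup_z(1-|z|^2)^\beta|u(\varphi')^2|<\infty$. The crux is (\ref{3.6}). For each $w\in\mathbb D$ I would build the test function
$$k_w(z)=\frac{c_0(1-|w|^2)}{(1-\bar w z)^\alpha}+\frac{c_1(1-|w|^2)^2}{(1-\bar w z)^{\alpha+1}}+\frac{c_2(1-|w|^2)^3}{(1-\bar w z)^{\alpha+2}},$$
and choose $c_0,c_1,c_2$ so that $k_w'(w)=k_w''(w)=0$ while $k_w(w)=(1-|w|^2)^{-(\alpha-1)}$; this is a nonsingular $3\times3$ linear system, solvable because the value, first- and second-derivative data amount to evaluating a basis of quadratics at the three distinct points $\alpha,\alpha+1,\alpha+2$. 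The standard test-function estimates show $\sup_w\|k_w\|_{\mathcal B_\alpha}<\infty$. Evaluating $(uC_\varphi k_w)''$ at a point $z_0$ with $\varphi(z_0)=w$ annihilates the two derivative terms and leaves $u''(z_0)k_w(w)$, so boundedness of $uC_\varphi$ yields $\sup_z(1-|z|^2)^\beta|u''(z)|(1-|\varphi(z)|^2)^{-(\alpha-1)}<\infty$.

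The hardest point is finishing (\ref{3.1}) and (\ref{3.2}), because for $\alpha>1$ the functions $g_w,f_w$ of Theorem 3.1 are no longer available: their constant summand $(1-|w|^2)^{-(\alpha-1)}$ forces $|g_w(0)|,|f_w(0)|\to\infty$ as $|w|\to1$, so $\sup_w\|g_w\|_{\mathcal B_\alpha}=\infty$ and that argument cannot be repeated verbatim. I would instead delete those constant summands, obtaining two-term functions $\tilde g_w,\tilde f_w$ that are uniformly bounded in $\mathcal B^\alpha$ and satisfy $\tilde g_w''(w)=0$, $\tilde f_w'(w)=0$. Now $(uC_\varphi\tilde g_w)''(z_0)$ retains both a $u''$ term and the $(2u'\varphi'+u\varphi'')$ term; the key observation is that the surviving $u''$ contribution equals a constant multiple of $(1-|z_0|^2)^\beta|u''(z_0)|(1-|\varphi(z_0)|^2)^{-(\alpha-1)}$, which (\ref{3.6}) already bounds. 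Hence I must prove (\ref{3.6}) first and then absorb that term, which yields bounds on $\frac{(1-|z|^2)^\beta|2u'\varphi'+u\varphi''||\varphi(z)|}{(1-|\varphi(z)|^2)^\alpha}$ and, via $\tilde f_w$, on $\frac{(1-|z|^2)^\beta|u(\varphi')^2||\varphi(z)|^2}{(1-|\varphi(z)|^2)^{\alpha+1}}$. Combining these with the numerator-only bounds (which cover the region where $|\varphi(z)|$ is small) gives (\ref{3.3}) and (\ref{3.4}).

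Finally I would convert each supremum over $z$ into a supremum over $n$. For (\ref{3.6}) this means applying Lemma 2.1(i) to $u''C_\varphi\colon H_{\nu_{\alpha-1}}^\infty\to H_{\nu_\beta}^\infty$, which is legitimate since $\nu_{\alpha-1}(z)=(1-|z|^2)^{\alpha-1}$ is a radial, non-increasing weight vanishing at the boundary for $\alpha>1$; this identifies $\sup_z(1-|z|^2)^\beta|u''(z)|(1-|\varphi(z)|^2)^{-(\alpha-1)}$ with $\sup_n\|u''\varphi^n\|_{\nu_\beta}/\|z^n\|_{\nu_{\alpha-1}}$, and Lemma 2.2(i) with exponent $\alpha-1$ gives $\|z^n\|_{\nu_{\alpha-1}}\asymp(n+1)^{-(\alpha-1)}$, producing the left side of (\ref{3.6}). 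The analogous applications to (\ref{3.3}) and (\ref{3.4}) with weights $\nu_\alpha$ and $\nu_{\alpha+1}$ reproduce (\ref{3.1}) and (\ref{3.2}), completing the proof.
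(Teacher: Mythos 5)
Your proof is correct, but your necessity argument runs in a genuinely different order from the paper's, and the difference is substantive. The paper obtains (\ref{3.1}) and (\ref{3.2}) by simply invoking the necessity proof of Theorem 3.1 ``with $\alpha>1$'', and only afterwards isolates $u''$: it tests against the two-term function $Q_w(z)=\frac{(\alpha+2)(1-|w|^2)}{\alpha(1-\bar{w}z)^\alpha}-\frac{(1-|w|^2)^2}{(1-\bar{w}z)^{\alpha+1}}$, which satisfies $Q_w''(w)=0$ but $Q_w'(w)=\bar{w}(1-|w|^2)^{-\alpha}\neq 0$, so the surviving first-derivative term is absorbed using the already established (\ref{3.3}); the final conversion via Lemma 2.1(i) and Lemma 2.2(i) with the weight $\nu_{\alpha-1}$ is the same as yours. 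You proceed in reverse: you first prove the pointwise form of (\ref{3.6}) with a three-term test function $k_w$ annihilating both $k_w'(w)$ and $k_w''(w)$ (your nonsingularity argument for the $3\times 3$ system is correct: it amounts to interpolation by the quadratics $1$, $x$, $x(x+1)$ at the distinct nodes $\alpha$, $\alpha+1$, $\alpha+2$), and then recover (\ref{3.3}) and (\ref{3.4}) from two-term functions by absorbing the $u''$ contribution via (\ref{3.6}). Your motivation for this reversal is exactly right and pinpoints a real defect in the paper: for $\alpha>1$ the functions $f_w$, $g_w$ of Theorem 3.1 carry constant summands of size $(1-|w|^2)^{1-\alpha}$, so $|f_w(0)|,|g_w(0)|\rightarrow\infty$ as $|w|\rightarrow 1$ and $\sup_w\|g_w\|_{\mathcal{B}_\alpha}=\infty$; hence Theorem 3.1's necessity cannot be quoted verbatim, and the paper's scheme, read literally, is circular --- it derives (\ref{3.6}) from (\ref{3.1}), while for $\alpha>1$ the natural repair of the derivation of (\ref{3.1}) (dropping the divergent constants) requires (\ref{3.6}). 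Your ordering removes this circularity at the cost of one extra elementary test function, so in effect you have not merely reproved the theorem but supplied the argument the paper's own proof is missing.
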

\begin{proof}Sufficiency. This part is similar to the proof in Theorem 3.1.

Necessity. Take the function
 \begin{eqnarray*}Q_w(z)=\frac{(\alpha+2)(1-|w|^2)}{\alpha(1-\bar{w}z)^\alpha}-\frac{(1-|w|^2)^2}{(1-\bar{w}z)^{\alpha+1}}, \end{eqnarray*}
Employing the necessity in Theorem 3.1 with $\alpha>1,$  we can obtain that  (\ref{3.1}) and (\ref{3.2}). Then  we can easily obtain  that
\begin{eqnarray*} \sup\limits_{z\in\mathbb{D}} \frac{(1-|z|^2)^\beta|u''(z)|}{(1-|\varphi(z)|^2)^{\alpha-1}}<\infty. \end{eqnarray*}
Similar to showing  the equivalence in (\ref{3.2}) we obtain (\ref{3.6}). This completes the proof.
\end{proof}

\section{essential norms}
In this section we estimate the essential norms of $uC_\varphi:\mathcal{B}^\alpha\rightarrow \mathcal{Z}_\beta$ in terms of $u, \varphi,$ their derivatives and $\varphi^n.$ Denote $\tilde{\mathcal{B}}^\alpha=\{f\in \mathcal{B}^\alpha:\;f(0)=0\}$. Let $D_\alpha: \mathcal{B}^\alpha\rightarrow H_{\nu_\alpha}^\infty$ and $ S_{\alpha}:\mathcal{B}^\alpha \rightarrow H_{\nu_{\alpha+1}}^\infty$ be the first-order derivative operator and the second-order derivative operator, respectively. By Lemma 2.2  we have that
\begin{eqnarray*}\|D_\alpha f\|_{H_{\nu_{\alpha}}}=\|f\|_{\mathcal{B}_\alpha}\;\;\mbox{and}\;\;\|S_\alpha f \|_{H_{\nu_{\alpha+1}}}\asymp\|f\|_{\mathcal{B}_\alpha} \;\;\mbox{for}\;\; f\in \tilde{\mathcal{B}}^\alpha.\end{eqnarray*}
Further by  $ (uC_\varphi f)''=u''C_\varphi f +(2u'\varphi' +u\varphi'') C_\varphi f +u(\varphi')^2 C_\varphi f'',$   it follows that
\begin{eqnarray}\|uC_\varphi\|_{e,\tilde{\mathcal{B}}^\alpha\rightarrow \mathcal{Z}_\beta}&\preceq& \| u''C_\varphi\|_{e,\tilde{\mathcal{B}}^\alpha\rightarrow H_{\nu_\beta}^\infty} +\|u(\varphi')^2 C_\varphi \|_{e, H_{\nu_{\alpha+1}}^\infty\rightarrow H_{\nu_{\beta}}^\infty} \nonumber\\&+& \|(2u'\varphi' +u\varphi'') C_\varphi\|_{e, H_{\nu_{\alpha}}^\infty\rightarrow H_{\nu_{\beta}}^\infty}.\label{4.1} \end{eqnarray}
For the upper bound, we only need to estimate the   right three essential norms.  It is obvious that every compact operator $T\in \mathcal{K}(\tilde{\mathcal{B}}^\alpha, \mathcal{Z}_\beta)$ can be extended to a compact operator $K\in \mathcal{K}(\mathcal{B}^\alpha, \mathcal{Z}_\beta) $. In fact, for every $f\in\mathcal{B}^\alpha$, $f-f(0)\in \tilde{\mathcal{B}}^\alpha,$ and we can define $K(f):=T(f-f(0))+f(0)$, which is a compact operator from $\mathcal{B}^\alpha$ to $\mathcal{Z}_\beta$, due to $K(f_k)$ has convergent subsequence when $\{f_k\}$ is a bounded sequence. In the following lemma we consider the compact operator $K_r$ on the space $\mathcal{B}^\alpha$ defined by $K_rf(z)=f(rz).$

\begin{Lemma}If $0<\alpha<\infty$ and $uC_\varphi$ is a bounded weighted composition operator from $\mathcal{B}^\alpha$ into $ \mathcal{Z}_\beta$, then \begin{eqnarray*} \|uC_\varphi\|_{e,\tilde{\mathcal{B}}^\alpha\rightarrow \mathcal{Z}_\beta}=\|uC_\varphi\|_{e, \mathcal{B}^\alpha\rightarrow \mathcal{Z}_\beta}.\end{eqnarray*} \end{Lemma}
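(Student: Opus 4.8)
The plan is to prove the two inequalities between the essential norms separately, exploiting the splitting $\mathcal{B}^\alpha = \tilde{\mathcal{B}}^\alpha \oplus \mathbb{C}$ of $\mathcal{B}^\alpha$ as the direct sum of the codimension-one subspace $\tilde{\mathcal{B}}^\alpha$ and the line of constant functions, together with the fact that $uC_\varphi$ acts on the constants as the finite-rank map $c \mapsto cu$ (since $c\circ\varphi = c$). The guiding principle is that a one-dimensional perturbation cannot alter an essential norm.

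First I would establish the easy inequality $\|uC_\varphi\|_{e,\tilde{\mathcal{B}}^\alpha\rightarrow \mathcal{Z}_\beta} \leq \|uC_\varphi\|_{e,\mathcal{B}^\alpha\rightarrow \mathcal{Z}_\beta}$. For any $K \in \mathcal{K}(\mathcal{B}^\alpha, \mathcal{Z}_\beta)$, the restriction $K|_{\tilde{\mathcal{B}}^\alpha}$ is compact on $\tilde{\mathcal{B}}^\alpha$, and since $\tilde{\mathcal{B}}^\alpha \subset \mathcal{B}^\alpha$ the operator norm taken over the subspace is dominated by the norm over the full space, so that $\|uC_\varphi - K\|_{\tilde{\mathcal{B}}^\alpha\rightarrow\mathcal{Z}_\beta} \leq \|uC_\varphi - K\|_{\mathcal{B}^\alpha\rightarrow\mathcal{Z}_\beta}$. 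Taking the infimum over compact $K$ yields this direction at once.

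For the substantive reverse inequality I would start from an arbitrary $T \in \mathcal{K}(\tilde{\mathcal{B}}^\alpha, \mathcal{Z}_\beta)$ and use the extension $Kf := T(f - f(0)) + f(0)u$ announced before the lemma. Since $uC_\varphi$ is bounded, the choice $f\equiv 1$ gives $u = uC_\varphi(1)\in\mathcal{Z}_\beta$, so $f \mapsto f(0)u$ is a bounded rank-one (hence compact) map into $\mathcal{Z}_\beta$; combined with the compactness of $T$ and the boundedness of $f \mapsto f - f(0)$, this shows $K \in \mathcal{K}(\mathcal{B}^\alpha, \mathcal{Z}_\beta)$. The key cancellation is that $uC_\varphi(f(0)) = f(0)u$, whence $(uC_\varphi - K)f = (uC_\varphi - T)(f - f(0))$. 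Using $\|f - f(0)\|_{\mathcal{B}_\alpha} = \|f\|_\alpha \leq \|f\|_{\mathcal{B}_\alpha}$ — the seminorm $\|\cdot\|_\alpha$ is unchanged by subtracting a constant, while the point evaluation at $0$ is killed — I obtain $\|uC_\varphi - K\|_{\mathcal{B}^\alpha\rightarrow\mathcal{Z}_\beta} \leq \|uC_\varphi - T\|_{\tilde{\mathcal{B}}^\alpha\rightarrow\mathcal{Z}_\beta}$. Passing to the infimum over $T$ gives $\|uC_\varphi\|_{e,\mathcal{B}^\alpha\rightarrow\mathcal{Z}_\beta} \leq \|uC_\varphi\|_{e,\tilde{\mathcal{B}}^\alpha\rightarrow\mathcal{Z}_\beta}$, and combined with the first step the two essential norms coincide.

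The only point requiring genuine care — though it is mild — is the simultaneous verification that the extension $K$ is truly compact and that the norm estimate $\|f - f(0)\|_{\mathcal{B}_\alpha} \leq \|f\|_{\mathcal{B}_\alpha}$ holds with constant $1$ (no loss). Both rest on the elementary observation that subtracting a constant leaves the Bloch seminorm invariant; everything else is the routine bookkeeping of splitting off a finite-rank piece, which leaves the essential norm untouched.
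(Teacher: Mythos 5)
Your proof is correct, and it reaches the lemma by a route whose mechanics differ from the paper's, although both rest on the same splitting of $\mathcal{B}^\alpha$ into $\tilde{\mathcal{B}}^\alpha$ plus the constants. The paper argues in the opposite direction: it fixes a compact $T\in\mathcal{K}(\mathcal{B}^\alpha,\mathcal{Z}_\beta)$ on the \emph{full} space, splits $\|uC_\varphi-T\|$ by the triangle inequality into a supremum over $\tilde{\mathcal{B}}^\alpha$ plus a supremum over the constants $\mathcal{A}$, passes from the infimum of this sum to a sum of two infima, identifies the first infimum with $\|uC_\varphi\|_{e,\tilde{\mathcal{B}}^\alpha\rightarrow\mathcal{Z}_\beta}$, and disposes of the constant part using the dilations $K_{r_n}$ (which act as the identity on $\mathcal{A}$, so that term is in fact $0$). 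You instead fix a compact $T$ on the \emph{subspace} and extend it by the rank-one piece $f\mapsto f(0)u$; note that the paper's pre-lemma remark extends by $f(0)$ rather than $f(0)u$, and it is precisely your $u$-twisted choice that produces the exact cancellation $(uC_\varphi-K)f=(uC_\varphi-T)(f-f(0))$, after which $\|f-f(0)\|_{\mathcal{B}_\alpha}=\|f\|_\alpha\le\|f\|_{\mathcal{B}_\alpha}$ gives $\|uC_\varphi-K\|_{\mathcal{B}^\alpha\rightarrow\mathcal{Z}_\beta}\le\|uC_\varphi-T\|_{\tilde{\mathcal{B}}^\alpha\rightarrow\mathcal{Z}_\beta}$ with constant $1$. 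What your version buys: it is self-contained and avoids two delicate points in the paper's chain, namely the inf-of-a-sum step (bounding $\inf_T$ of a sum by the sum of separate infima over the same $T$ is not a valid move in general, though it is repairable here by assembling one compact operator from two near-minimizers), and the $K_{r_n}$ machinery, which is superfluous for constant functions. What the paper's version buys: it transfers verbatim the template of \cite[Lemma 3.1]{EL}, and the same scheme would still apply if the complement of $\tilde{\mathcal{B}}^\alpha$ were a larger subspace on which $I-K_{r_n}$ only tends to zero in norm rather than vanishing identically. You also correctly isolate the only place where boundedness of $uC_\varphi$ enters, namely $u=uC_\varphi 1\in\mathcal{Z}_\beta$, which makes your rank-one extension bounded and hence $K$ compact.
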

\begin{proof} Although the proof is similar to \cite[Lemma 3.1]{EL}, we give the process for the convenience of the readers.  It is obvious that $ \|uC_\varphi\|_{e,\tilde{\mathcal{B}}^\alpha\rightarrow \mathcal{Z}_\beta}\leq \|uC_\varphi\|_{e, \mathcal{B}^\alpha\rightarrow \mathcal{Z}_\beta}.$  For the converse, let $T \in K(\mathcal{B}^\alpha, \mathcal{Z}_\beta)$ be given. Choose  an increasing sequence $(r_n)$ in $(0,1)$ converging to 1. Denote $\mathcal{A}$ the closed subspace of $\mathcal{B}^\alpha$ consists of  all constant functions. Then we have
\begin{eqnarray*}&& \|uC_\varphi -T\|_{\mathcal{B}^\alpha \rightarrow \mathcal{Z}_\beta}=\sup\limits_{\|f\|_{\mathcal{B}^\alpha} \leq 1} \|uC_\varphi (f) -T(f)\|_{\mathcal{Z}_\beta}\\ && \leq \sup\limits_{\|f\|_{\mathcal{B}_\alpha}\leq 1} \|uC_\varphi (f-f(0)) -T|_{\tilde{\mathcal{B}}^\alpha}(f-f(0))\|_{\mathcal{Z}_\beta} +\sup\limits_{\|f\|_{\mathcal{B}_\alpha} \leq 1}\|uC_\varphi (f(0))-T(f(0))\|_{\mathcal{Z}_\beta}\nonumber\\&&\leq \sup\limits_{g\in \tilde{\mathcal{B}}^{\alpha}} \|uC_\varphi (g ) -T|_{\tilde{\mathcal{B}}^\alpha}(g)\|_{\mathcal{Z}_\beta} +\sup\limits_{h\in \mathcal{A}} \|uC_\varphi (h ) -T|_{\mathcal{A}}(h)\|_{\mathcal{Z}_\beta}. \end{eqnarray*}
Hence \begin{eqnarray*}\inf\limits_{T\in \mathcal{K}(\mathcal{B}^\alpha, \mathcal{Z}_{\beta})}\|uC_\varphi-T\|_{\mathcal{B}^\alpha\rightarrow \mathcal{Z}_{\beta}} &\leq& \inf_{T\in \mathcal{K}(\mathcal{B}^\alpha, \mathcal{Z}_{\beta}) }\|uC_\varphi-T|_{\tilde{\mathcal{B}}^\alpha}\|_{\tilde{\mathcal{B}}^\alpha\rightarrow \mathcal{Z}_{\beta}}\\& +& \inf_{T\in \mathcal{K}(\mathcal{B}^\alpha, \mathcal{Z}_{\beta}) }\|uC_\varphi-T|_{ \mathcal{A} }\|_{ \mathcal{A} \rightarrow \mathcal{Z}_{\beta}} \nonumber\\& \leq&  \|u C_\varphi\|_{e,\tilde{\mathcal{B}}^\alpha\rightarrow \mathcal{Z}_\beta} +\lim\limits_{n\rightarrow \infty} \|uC_\varphi (I-K_{r_n})\|_{ \mathcal{A} \rightarrow \mathcal{Z}_{\beta}}. \end{eqnarray*}
Since   $uC_\varphi: \mathcal{B}^\alpha\rightarrow \mathcal{Z}_\beta$ is bounded, it follows that   \begin{eqnarray*} \lim\limits_{n\rightarrow \infty} \|uC_\varphi (I-K_{r_n})\|_{ \mathcal{A} \rightarrow \mathcal{Z}_{\beta}}\leq C \lim\limits_{n\rightarrow \infty} \| I-K_{r_n} \|_{ \mathcal{A} \rightarrow \mathcal{Z}_{\beta}}=0. \end{eqnarray*}Thus  we obtain  $ \|uC_\varphi\|_{e,\tilde{\mathcal{B}}^\alpha\rightarrow \mathcal{Z}_\beta}\geq \|uC_\varphi\|_{e, \mathcal{B}^\alpha\rightarrow \mathcal{Z}_\beta}. $  The proof is finished.
 \end{proof}
Since $uC_\varphi:\mathcal{B}^\alpha\rightarrow \mathcal{Z}_\beta$ is bounded, then $u''C_\varphi$ maps $\mathcal{B}^\alpha $ boundedly in $H_{\nu_\beta}^\infty$ from  $u\in \mathcal{Z}_\beta$ for $0<\alpha<1,$ (\ref{3.5}) for $\alpha=1$ and  (\ref{3.6})  for $\alpha>1$.  Then from Lemma 4.1 we can get $\| u''C_\varphi\|_{e,\tilde{\mathcal{B}}^\alpha\rightarrow H_{\nu_\beta}^\infty}=\| u''C_\varphi\|_{e, \mathcal{B}^\alpha\rightarrow H_{\nu_\beta}^\infty} $. By (\ref{4.1}) we have that
\begin{eqnarray}\|uC_\varphi\|_{e, \mathcal{B}^\alpha\rightarrow \mathcal{Z}_\beta}&\preceq& \| u''C_\varphi\|_{e, \mathcal{B}^\alpha\rightarrow H_{\nu_\beta}^\infty} +\|u(\varphi')^2 C_\varphi \|_{e, H_{\nu_{\alpha+1}}^\infty\rightarrow H_{\nu_{\beta}}^\infty} \nonumber\\&+& \|(2u'\varphi' +u\varphi'') C_\varphi\|_{e, H_{\nu_{\alpha}}^\infty\rightarrow H_{\nu_{\beta}}^\infty}.\label{4.2} \end{eqnarray}
In next lemma we give the estimates for the essential norm for   $uC_\varphi:\mathcal{B}^\alpha\rightarrow H_{\nu_\beta}^\infty$.
\begin{Lemma} Let $0<\alpha<\infty$, the weighted composition operator $uC_\varphi:\mathcal{B}^\alpha\rightarrow H_{\nu_\beta}^\infty$ be bounded.

(i) If $0<\alpha<1,$ then $uC_\varphi:\mathcal{B}^\alpha\rightarrow H_{\nu_\beta}^\infty$ is compact.

(ii) If $\alpha=1$, then\begin{eqnarray*} \|uC_\varphi\|_{e,\mathcal{B}\rightarrow H_{\nu_\beta}^\infty} \asymp \limsup\limits_{n\rightarrow \infty} (\log n) \|u\varphi ^n\|_{\nu_\beta}.\end{eqnarray*}

(iii) If $\alpha>1$, then
\begin{eqnarray*}\|uC_\varphi\|_{e,\mathcal{B}^\alpha\rightarrow H_{\nu_\beta}^\infty}\asymp \limsup\limits_{n\rightarrow \infty} (n+1)^{\alpha-1}\|u\varphi^n\|_{\nu_\beta}.\end{eqnarray*}\end{Lemma}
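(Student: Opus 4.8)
The plan is to reduce each case to the already-available weighted-Banach-space statement Lemma 2.1, using the growth estimates of Lemma 2.4 as the bridge between $\mathcal{B}^\alpha$ and the appropriate weighted space. First I would record the key dictionary: by Lemma 2.2 one has $1/\|z^n\|_{\nu_{\alpha-1}}\asymp(n+1)^{\alpha-1}$ for $\alpha>1$ and $1/\|z^n\|_{\nu_{\log}}\asymp\log n$, so Lemma 2.1 (ii) (with the associated weights $\tilde\nu_{\alpha-1}=\nu_{\alpha-1}$ and $\tilde\nu_{\log}=\nu_{\log}$) yields
\[
\limsup_{n\to\infty}(n+1)^{\alpha-1}\|u\varphi^n\|_{\nu_\beta}\asymp\|uC_\varphi\|_{e,H_{\nu_{\alpha-1}}^\infty\to H_{\nu_\beta}^\infty}\asymp\limsup_{|\varphi(z)|\to1}\frac{(1-|z|^2)^\beta|u(z)|}{(1-|\varphi(z)|^2)^{\alpha-1}},
\]
together with the analogous identity involving $\log n$ and $\log\frac{e}{1-|\varphi(z)|^2}$ when $\alpha=1$. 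Thus it suffices to prove $\|uC_\varphi\|_{e,\mathcal{B}^\alpha\to H_{\nu_\beta}^\infty}\asymp\|uC_\varphi\|_{e,H_{\nu_{\alpha-1}}^\infty\to H_{\nu_\beta}^\infty}$ for $\alpha>1$, and the corresponding comparison with $H_{\nu_{\log}}^\infty$ for $\alpha=1$.

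For (i), where $0<\alpha<1$, I would argue compactness directly from Lemma 2.6. Applying the bounded operator to $f\equiv1$ gives $u\in H_{\nu_\beta}^\infty$. If $\{f_k\}$ is bounded in $\mathcal{B}^\alpha$ and tends to $0$ uniformly on compact subsets, then Lemma 2.5 gives $\sup_{z}|f_k(z)|\to0$, whence $\|uC_\varphi f_k\|_{\nu_\beta}\le\|u\|_{\nu_\beta}\sup_{w\in\mathbb{D}}|f_k(w)|\to0$, so $uC_\varphi$ is compact. For the upper bounds in (ii) and (iii) I would factor through the inclusion. By Lemma 2.4 the embedding $J:\mathcal{B}^\alpha\hookrightarrow H_{\nu_{\alpha-1}}^\infty$ (resp. $\mathcal{B}\hookrightarrow H_{\nu_{\log}}^\infty$) is bounded, and testing the boundedness of $uC_\varphi$ on $\mathcal{B}^\alpha$ against peak functions gives the supremum condition of Lemma 2.1 (i), so $uC_\varphi:H_{\nu_{\alpha-1}}^\infty\to H_{\nu_\beta}^\infty$ is bounded. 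Since $uC_\varphi|_{\mathcal{B}^\alpha}=(uC_\varphi)\circ J$ as operators and $K'\circ J$ is compact for every compact $K'$, the elementary factorization inequality $\|SJ\|_e\le\|J\|\,\|S\|_e$ yields $\|uC_\varphi\|_{e,\mathcal{B}^\alpha\to H_{\nu_\beta}^\infty}\le\|J\|\,\|uC_\varphi\|_{e,H_{\nu_{\alpha-1}}^\infty\to H_{\nu_\beta}^\infty}$, which is the desired upper bound after invoking the dictionary above.

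For the matching lower bounds I would use peak test functions. Choosing $z_j$ with $|\varphi(z_j)|\to1$ along which the relevant quotient tends to its $\limsup$, setting $w_j=\varphi(z_j)$, I would take $f_{w_j}(z)=(1-|w_j|^2)(1-\bar w_jz)^{-\alpha}$ for $\alpha>1$ (resp. a logarithmic analogue such as $(\log\frac{2}{1-\bar w_jz})^2/\log\frac{2}{1-|w_j|^2}$ for $\alpha=1$). These are bounded in $\mathcal{B}^\alpha$, converge to $0$ uniformly on compacta, and satisfy $|f_{w_j}(w_j)|\asymp(1-|w_j|^2)^{-(\alpha-1)}$ (resp. $\asymp\log\frac{e}{1-|w_j|^2}$). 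For any compact $K:\mathcal{B}^\alpha\to H_{\nu_\beta}^\infty$ one has $\|Kf_{w_j}\|_{\nu_\beta}\to0$, so
\[
\|uC_\varphi-K\|\succeq\limsup_j\|uC_\varphi f_{w_j}\|_{\nu_\beta}\succeq\limsup_j\frac{(1-|z_j|^2)^\beta|u(z_j)|}{(1-|\varphi(z_j)|^2)^{\alpha-1}},
\]
and taking the infimum over $K$ gives the lower bound.

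I expect the main obstacle to lie in this last step: verifying that the peak functions have uniformly bounded $\mathcal{B}^\alpha$-norm (the standard estimate $\sup_z(1-|z|^2)^\alpha|1-\bar w z|^{-\alpha-1}\asymp(1-|w|^2)^{-1}$, together with its more delicate logarithmic counterpart at $\alpha=1$), and that $\|Kf_{w_j}\|_{\nu_\beta}\to0$, which rests on compactness turning the locally uniform null convergence of the bounded sequence $\{f_{w_j}\}$ into norm convergence. Once these technical points are settled, the upper and lower bounds coincide through the dictionary of the first paragraph, completing all three cases.
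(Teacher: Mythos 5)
Your proposal is correct, but for parts (ii) and (iii) it takes a genuinely different route from the paper. Part (i) coincides with the paper's argument: test with $f\equiv 1$, then use Lemma 2.5 and the criterion Lemma 2.6. For (ii) and (iii), however, the paper does no independent work on the Bloch side: it simply cites Stevi\'{c}'s essential norm theorems \cite[Theorems 3.2 and 3.4]{S} (with $n=1$) to get
$\|uC_\varphi\|_{e,\mathcal{B}^\alpha\rightarrow H_{\nu_\beta}^\infty}\asymp\limsup_{|\varphi(z)|\rightarrow 1}\frac{(1-|z|^2)^\beta|u(z)|}{(1-|\varphi(z)|^2)^{\alpha-1}}$
(resp.\ the logarithmic quantity when $\alpha=1$), and then applies exactly the Lemma 2.1(ii)/Lemma 2.2 dictionary that you set up in your first paragraph. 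You instead re-derive that estimate from tools internal to the paper: the upper bound via the bounded embedding $J:\mathcal{B}^\alpha\hookrightarrow H_{\nu_{\alpha-1}}^\infty$ (resp.\ $H_{\nu_{\log}}^\infty$) supplied by Lemma 2.4, the boundedness of $uC_\varphi$ on the weighted space obtained from peak functions plus Lemma 2.1(i), and the factorization inequality $\|SJ\|_e\leq\|S\|_e\,\|J\|$; the lower bound via the standard peak (resp.\ logarithmic) test functions. This is in effect the Hyv\"{a}rinen--Lindstr\"{o}m method of \cite{HL}, which the paper follows elsewhere but not in this lemma; what it buys is self-containedness (no appeal to \cite{S}) and a transparent explanation of why the factors $(n+1)^{\alpha-1}$ and $\log n$ appear, namely as $1/\|z^n\|_\nu$ for the weight $\nu$ governing the growth of $\mathcal{B}^\alpha$-functions, while the paper's citation buys brevity. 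The one step you rightly flag, that $\|Kf_{w_j}\|_{\nu_\beta}\rightarrow 0$ for an \emph{arbitrary} compact $K:\mathcal{B}^\alpha\rightarrow H_{\nu_\beta}^\infty$, does not follow from compactness and locally uniform convergence alone; the clean justification is that your test functions lie in the little Bloch-type space, are bounded there and tend to zero uniformly on compacta, hence tend to zero weakly, and compact operators are completely continuous. This is the same step the paper itself glosses over (attributing it to Lemma 2.6, which as stated gives only the converse implication) in the proofs of Theorems 4.3--4.5, so your argument is at, if not above, the paper's own level of rigor on this point.
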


\begin{proof}(i)  Since $uC_\varphi:\mathcal{B}^\alpha\rightarrow H_{\nu_\beta}^\infty$ is bounded. Choose $f(z)=1,$ we can obtain $u\in H_{\nu_\beta}^\infty.$  If $(f_n)$ is a bounded sequence in $\mathcal{B}^\alpha$ converging to zero uniformly on compact subsets of $\mathbb{D}$. By Lemma 2.5 we have that
\begin{eqnarray*}\|uC_\varphi(f_n)\|_{\nu_\beta}=\sup\limits_{z\in \mathbb{D}} (1-|z|^2)^\beta |u(z)||f_n(\varphi(z))|\leq \|u\|_{\nu_\beta}\sup\limits_{z\in \mathbb{D}}|f_n(z)| =0. \end{eqnarray*}
By Lemma 2.6 it follows that $uC_\varphi:\mathcal{B}_\alpha\rightarrow H_{\nu_\beta}^\infty$ is compact.

(ii) For $\alpha=1.$ By \cite[Theorem 3.4]{S} with $n=1$, we obtain that
\begin{eqnarray*}\|uC_\varphi\|_{e, \mathcal{B}\rightarrow H_{\nu_\beta}^\infty}\asymp \limsup\limits_{|\varphi(z)|\rightarrow1}(1-|z|^2)^\beta |u(z)|\log\frac{1+|\varphi(z)|}{1-|\varphi(z)|}. \end{eqnarray*}
since the function $\log\frac{1+x}{1-x}\asymp \log\frac{e}{1-x^2},\;x\in[0,1),$ we have
\begin{eqnarray*}\|uC_\varphi\|_{e, \mathcal{B}\rightarrow H_{\nu_\beta}^\infty}\asymp \limsup\limits_{|\varphi(z)|\rightarrow1}(1-|z|^2)^\beta |u(z)|\log\frac{e}{1-|\varphi(z)|^2}. \end{eqnarray*}
By Lemma 2.1 (ii) and Lemma 2.2 (ii) it follows that
\begin{eqnarray*} \|uC_\varphi\|_{e, \mathcal{B}\rightarrow H_{\nu_\beta}^\infty}\asymp\limsup\limits_{n\rightarrow\infty} \frac{(\log n) \|u\varphi^n\|_{\nu_\beta}}{\|z^n\|_{\nu_{\log}}(\log n)}\asymp \limsup\limits_{n\rightarrow\infty}(\log n) \|u\varphi^n\|_{\nu_\beta}. \end{eqnarray*}

(iii) For $\alpha>1.$ By \cite[Theorem 3.2]{S} with $n=1,$ it follows that  \begin{eqnarray*}\|uC_\varphi\|_{e, \mathcal{B}^\alpha\rightarrow H_{\nu_\beta}^\infty}\asymp \limsup\limits_{|\varphi(z)|\rightarrow1}\frac{(1-|z|^2)^\beta |u(z)|}{(1-|\varphi(z)|^2)^{\alpha-1}}. \end{eqnarray*}
Similarly by Lemma 2.1 (ii) and Lemma 2.2 (i) it follows (iii). This completes the proof.
 \end{proof}

\begin{thm} Let $0<\alpha<1$, the weighted composition operator $uC_\varphi:\mathcal{B}^\alpha\rightarrow \mathcal{Z}_\beta$ is bounded. Then  \begin{eqnarray*} \|uC_\varphi\|_{e,\mathcal{B}^\alpha\rightarrow \mathcal{Z}_\beta}&\asymp & \max \Big\{\limsup\limits_{n\rightarrow \infty} (n+1)^\alpha \|(2u'\varphi'+u\varphi'')\varphi^n\|_{\nu_\beta},\\ \nonumber && \;\;\;\;\;\;\;\;\;\;\;\;\;\;\;\limsup\limits_{n\rightarrow \infty} (n+1)^{\alpha+1}\| u(\varphi')^2 \varphi^n\|_{\nu_\beta}\Big\}.\end{eqnarray*}\end{thm}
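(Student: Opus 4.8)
The plan is to prove the two inequalities hidden in the symbol $\asymp$ separately, after reducing everything to $\tilde{\mathcal{B}}^\alpha$ by Lemma 4.1 and using the decomposition leading to (\ref{4.2}). Throughout I abbreviate
$$L:=\max\Big\{\limsup_{n\to\infty}(n+1)^\alpha\|(2u'\varphi'+u\varphi'')\varphi^n\|_{\nu_\beta},\ \limsup_{n\to\infty}(n+1)^{\alpha+1}\|u(\varphi')^2\varphi^n\|_{\nu_\beta}\Big\}.$$

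For the upper bound I would start from (\ref{4.2}). Since $0<\alpha<1$, Lemma 4.2(i) applied to the symbol $u''$ (which induces a bounded operator into $H_{\nu_\beta}^\infty$ because $u\in\mathcal{Z}_\beta$) shows $u''C_\varphi:\mathcal{B}^\alpha\to H_{\nu_\beta}^\infty$ is compact, so its essential norm is $0$ and the first term of (\ref{4.2}) disappears. To the remaining two terms I apply Lemma 2.1(ii): with symbol $u(\varphi')^2$, $\nu=\nu_{\alpha+1}$, $w=\nu_\beta$ it gives $\|u(\varphi')^2C_\varphi\|_{e,H_{\nu_{\alpha+1}}^\infty\to H_{\nu_\beta}^\infty}=\limsup_n\|u(\varphi')^2\varphi^n\|_{\nu_\beta}/\|z^n\|_{\nu_{\alpha+1}}$, and with symbol $2u'\varphi'+u\varphi''$, $\nu=\nu_\alpha$, $w=\nu_\beta$ the analogous formula with $\|z^n\|_{\nu_\alpha}$. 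Using Lemma 2.2(i) in the form $\|z^n\|_{\nu_{\alpha+1}}\asymp(n+1)^{-(\alpha+1)}$ and $\|z^n\|_{\nu_\alpha}\asymp(n+1)^{-\alpha}$ turns these into the two limsup expressions, whence $\|uC_\varphi\|_{e,\mathcal{B}^\alpha\to\mathcal{Z}_\beta}\preceq L$.

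For the lower bound I would reuse the test functions $f_w$ and $g_w$ from the necessity part of Theorem 3.1. If $\|\varphi\|_\infty<1$ then each $\|\varphi^n\|_{\nu_\beta}$ decays geometrically, so $L=0$ and the upper bound already forces $\|uC_\varphi\|_e=0$; hence I may fix a sequence $(z_k)$ with $|\varphi(z_k)|\to1$ and set $a_k=\varphi(z_k)$. A direct differentiation (exactly the cancellations engineered in Theorem 3.1) shows $f_{a_k}(a_k)=f_{a_k}'(a_k)=0$ with $|f_{a_k}''(a_k)|\asymp(1-|a_k|^2)^{-\alpha-1}$, while $g_{a_k}(a_k)=g_{a_k}''(a_k)=0$ with $|g_{a_k}'(a_k)|\asymp(1-|a_k|^2)^{-\alpha}$; moreover $\|f_{a_k}\|_{\mathcal{B}_\alpha}\asymp\|g_{a_k}\|_{\mathcal{B}_\alpha}\asymp1$, and since $0<\alpha<1$ forces $(1-|a_k|^2)^{1-\alpha}\to0$, both families tend to $0$ uniformly on compact subsets of $\mathbb{D}$. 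Substituting into $(uC_\varphi f)''=u''C_\varphi f+(2u'\varphi'+u\varphi'')C_\varphi f'+u(\varphi')^2C_\varphi f''$ and evaluating at $z_k$ collapses the expansion to one surviving term in each case, giving
$$(1-|z_k|^2)^\beta|(uC_\varphi f_{a_k})''(z_k)|\asymp\frac{(1-|z_k|^2)^\beta|u(z_k)(\varphi'(z_k))^2|}{(1-|\varphi(z_k)|^2)^{\alpha+1}},$$
$$(1-|z_k|^2)^\beta|(uC_\varphi g_{a_k})''(z_k)|\asymp\frac{(1-|z_k|^2)^\beta|(2u'\varphi'+u\varphi'')(z_k)|}{(1-|\varphi(z_k)|^2)^{\alpha}}.$$
For any compact $K\in\mathcal{K}(\mathcal{B}^\alpha,\mathcal{Z}_\beta)$ I then invoke the standard fact that a bounded sequence in $\mathcal{B}^\alpha$ converging to $0$ uniformly on compacta (a normal family) is sent by $K$ to a norm-null sequence, so that $\|uC_\varphi-K\|\succeq\limsup_k\|uC_\varphi f_{a_k}\|_{\mathcal{Z}_\beta}/\|f_{a_k}\|_{\mathcal{B}_\alpha}$ and likewise for $g_{a_k}$. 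Choosing $(z_k)$ to nearly realize $\limsup_{|\varphi(z)|\to1}$ of each pointwise quotient and passing back through Lemma 2.1(ii) and Lemma 2.2(i) as in the upper bound yields $\|uC_\varphi\|_e\succeq\limsup_n(n+1)^{\alpha+1}\|u(\varphi')^2\varphi^n\|_{\nu_\beta}$ and $\|uC_\varphi\|_e\succeq\limsup_n(n+1)^\alpha\|(2u'\varphi'+u\varphi'')\varphi^n\|_{\nu_\beta}$; taking the maximum gives $\|uC_\varphi\|_e\succeq L$.

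The hard part is the lower bound, specifically checking that $f_{a_k}$ and $g_{a_k}$ annihilate exactly the two unwanted derivative orders at $a_k$ while retaining the surviving derivative at the correct order $(1-|a_k|^2)^{-\alpha-1}$, respectively $(1-|a_k|^2)^{-\alpha}$, and that both families stay bounded in $\mathcal{B}^\alpha$; these are precisely the cancellations already verified in Theorem 3.1, so the computation transfers. A secondary point, dispatched at the outset, is the degenerate case $\|\varphi\|_\infty<1$, where $L=0$ and the upper bound closes the argument.
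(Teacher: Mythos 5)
Your proposal is correct and follows essentially the same route as the paper: the upper bound via the decomposition behind (\ref{4.2}), Lemma 4.2(i) to kill the $u''C_\varphi$ term, and Lemma 2.1(ii) with Lemma 2.2(i) for the remaining two terms; the lower bound via the same test functions $f_{\varphi(z_k)}$, $g_{\varphi(z_k)}$ from Theorem 3.1, whose cancellations isolate the $u(\varphi')^2 f''$ and $(2u'\varphi'+u\varphi'')f'$ terms respectively. Your explicit treatment of the degenerate case $\|\varphi\|_\infty<1$ is a small point of extra care that the paper leaves implicit, but it does not change the argument.
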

\begin{proof} The boundedness of  $uC_\varphi:\mathcal{B}^\alpha\rightarrow \mathcal{Z}_\beta$ implies that $u'' C_\varphi:\mathcal{B}^\alpha\rightarrow H_{\nu_\beta}^\infty$,  $(2u'\varphi'+u\varphi'')C_\varphi: H_{\nu_\alpha}^\infty\rightarrow H_\beta^\infty$ and $ u(\varphi')^2C_\varphi: H_{\nu_{\alpha+1}}^\infty\rightarrow H_{\nu_\beta}^\infty $ are bounded  weighted composition operators by Theorem 3.1.

The upper estimate. From Lemma 4.2 it follows that $\|u''C_\varphi\|_{e,\mathcal{B}^\alpha\rightarrow H_{\nu_\beta}^\infty }=0.$ On the other hand, by Lemma 2.1 (i) and Lemma 2.2 (i),
\begin{eqnarray*} \|(2u'\varphi'+u\varphi'')C_\varphi \|_{e,H_{\nu_\alpha}^\infty\rightarrow H_{\nu_\beta}^\infty }&=&\limsup\limits_{n\rightarrow \infty} \frac{\| (2u'\varphi'+u\varphi'') \varphi^n\|_{\nu_\beta}}{\|z^n\|_{\nu_\alpha}}\nonumber\\&\asymp& \limsup\limits_{n\rightarrow \infty} (n+1)^\alpha\| (2u'\varphi'+u\varphi'') \varphi^n\|_{\nu_\beta}. \end{eqnarray*}
\begin{eqnarray*} \|u(\varphi')^2C_\varphi \|_{e,H_{\nu_{\alpha+1}}^\infty\rightarrow H_{\nu_\beta}^\infty}\asymp \limsup\limits_{n\rightarrow \infty} (n+1)^{\alpha+1}\| u(\varphi')^2 \varphi^n\|_{\nu_\beta}. \end{eqnarray*}
Thus by (\ref{4.2}) we obtain that\begin{eqnarray*} \|uC_\varphi\|_{e,\mathcal{B}^\alpha\rightarrow \mathcal{Z}_\beta}& \preceq&  \max \Big\{\limsup\limits_{n\rightarrow \infty} (n+1)^\alpha \|(2u'\varphi'+u\varphi'')\varphi^n\|_{\nu_\beta},\\ \nonumber && \;\;\;\;\;\;\;\;\;\;\;\;\;\;\;\limsup\limits_{n\rightarrow \infty} (n+1)^{\alpha+1}\| u(\varphi')^2 \varphi^n\|_{\nu_\beta}\Big\}. \end{eqnarray*}

The lower estimate. Let $\{z_k\}$ be a sequence in $\mathbb{D}$ such that  $|\varphi(z_k)|\rightarrow 1$ as $n\rightarrow \infty.$ Define
\begin{eqnarray*}f_k(z)=\frac{1-|\varphi(z_k)|^2}{(1- \overline{\varphi(z_k)}z)^\alpha}-\frac{\alpha(1-|\varphi(z_k)|^2)^2}{(\alpha+1)(1- \overline{\varphi(z_k)}z)^{\alpha+1}}-\frac{1}{(\alpha+1)(1-|\varphi(z_k)|^2)^{\alpha-1}}, \end{eqnarray*}
\begin{eqnarray*} g_k(z)=\frac{1-|\varphi(z_k)|^2}{(1-  \overline{\varphi(z_k)}z)^\alpha}-\frac{\alpha(1-|\varphi(z_k)|^2)^2}{(\alpha+2)(1- \overline{\varphi(z_k)}z)^{\alpha+1}}-\frac{2}{(\alpha+2)(1-|\varphi(z_k)|^2)^{\alpha-1}}, \end{eqnarray*}
It is obvious that $f_k$ and $g_k$ are bounded sequences in $\mathcal{B}^\alpha$ and converge to zero uniformly on compact subset of $\mathbb{D}$. By Lemma 2.6, for every compact operator $T:\mathcal{B}^\alpha\rightarrow \mathcal{Z}_\beta$, we have that $\|T f_k\|_{\mathcal{Z}_\beta}\rightarrow 0$ and $\|Tg_k\|_{\mathcal{Z}_\beta}\rightarrow 0$ as $k\rightarrow \infty$. Thus
\begin{eqnarray*}\|uC_\varphi-T\|_{\mathcal{B}^\alpha\rightarrow \mathcal{Z}_\beta}&\succeq& \limsup\limits_{k\rightarrow \infty} \|uC_\varphi (f_k)\|_{\mathcal{Z}_\beta} \nonumber\\&\succeq& \limsup\limits_{k\rightarrow \infty} \frac{\alpha(1-|z_k|^2)^\beta|u(z_k)\varphi'(z_k)^2||\varphi(z_k)|^2}{(1-|\varphi(z_k)|^2)^{\alpha+1}}\nonumber\\&\succeq &\limsup\limits_{k\rightarrow \infty} \frac{ (1-|z_k|^2)^\beta|u(z_k)\varphi'(z_k)^2| }{(1-|\varphi(z_k)|^2)^{\alpha+1}}\nonumber\\&\asymp&\limsup\limits_{n\rightarrow \infty} (n+1)^{\alpha+1}\| u(\varphi')^2 \varphi^n\|_{\nu_\beta}. \end{eqnarray*}
\begin{eqnarray*}\|u C_\varphi-T\|_{\mathcal{B}^\alpha\rightarrow \mathcal{Z}_\beta}&\succeq& \limsup\limits_{k\rightarrow \infty} \|uC_\varphi (g_k)\|_{\mathcal{Z}_\beta} \nonumber\\&\succeq& \limsup\limits_{k\rightarrow \infty} \frac{(1-|z_k|^2)^\beta |2u'(z_k)\varphi'(z_k)+u(z_k)\varphi''(z_k)|}{(1-|\varphi(z_k)|^2)^\alpha}\nonumber\\&\asymp&\limsup\limits_{n\rightarrow \infty} (n+1)^\alpha\| (2u'\varphi'+u\varphi'') \varphi^n\|_{\nu_\beta}. \end{eqnarray*}
From the above two inequalities we obtain the lower estimate. This completes the proof.
 \end{proof}
In the next two theorems, we need the following test functions sequences. Let $\{z_k\}$ be a sequence in $\mathbb{D}$ such that  $|\varphi(z_k)|\rightarrow 1$ as $k\rightarrow \infty.$ Define
\begin{eqnarray} f_k(z)=\frac{1-|\varphi(z_k)|^2}{(1-\overline{\varphi(z_k)}z)^\alpha} -\frac{(1-|\varphi(z_k)|^2)^2}{(1-\overline{\varphi(z_k)}z)^{\alpha+1}},\label{4.3}\end{eqnarray}
\begin{eqnarray}g_k(z)=\frac{1-|\varphi(z_k)|^2}{(1-\overline{\varphi(z_k)}z)^\alpha} -\frac{\alpha(1-|\varphi(z_k)|^2)^2}{(\alpha+1)(1-\overline{\varphi(z_k)}z)^{\alpha+1}},\label{4.4}\end{eqnarray}
\begin{eqnarray}h_k(z)=\frac{1-|\varphi(z_k)|^2}{(1-\overline{\varphi(z_k)}z)^\alpha} -\frac{\alpha(1-|\varphi(z_k)|^2)^2}{(\alpha+2)(1-\overline{\varphi(z_k)}z)^{\alpha+1}}.\label{4.5} \end{eqnarray}
It is easy to see that $f_k,\; g_k,\;$ and $h_k$ are all in $\mathcal{B}^\alpha$ and converge to zero uniformly on the compact subset of $\mathbb{D}$. Moreover, \begin{eqnarray*} f_k(\varphi(z_k))=0,\;\; f_k'(\varphi(z_k))=\frac{-\overline{\varphi(z_k)}}{(1-|\varphi(z_k)|^2 )^\alpha},\;\;f_k''(\varphi(z_k))=\frac{-2(\alpha+1)(\overline{\varphi(z_k)})^2}{(1-|\varphi(z_k)|^2)^{\alpha+1}}.\end{eqnarray*}
 \begin{eqnarray*} g_k(\varphi(z_k))=\frac{1}{(\alpha+1)(1-|\varphi(z_k)|^2)^{\alpha-1}},\;\; g_k'(\varphi(z_k))=0,\;\;g_k''(\varphi(z_k))=\frac{- \alpha  (\overline{\varphi(z_k)})^2}{(1-|\varphi(z_k)|^2)^{\alpha+1}}.\end{eqnarray*}
 \begin{eqnarray*} h_k(\varphi(z_k))=\frac{2}{(\alpha+2)(1-|\varphi(z_k)|^2)^{\alpha-1}},\;\; h_k'(\varphi(z_k))=\frac{ \alpha\overline{\varphi(z_k)}}{(\alpha+2)(1-|\varphi(z_k)|^2 )^\alpha},\;\;h_k''(\varphi(z_k))=0.\end{eqnarray*}

\begin{thm} Let $\alpha>1$, the weighted composition operator $uC_\varphi:\mathcal{B}^\alpha \rightarrow \mathcal{Z}_\beta$ is bounded. Then  \begin{eqnarray*} \|uC_\varphi\|_{e,\mathcal{B}^\alpha \rightarrow \mathcal{Z}_\beta}&\asymp&\max \Big\{\limsup\limits_{n\rightarrow \infty} (n+1)^\alpha \|(2u'\varphi'+u\varphi'')\varphi^n\|_{\nu_\beta},\\ \nonumber && \;\;\;\;\;\;\;\;\;\;\;\;\;\;\;\limsup\limits_{n\rightarrow \infty} (n+1)^{\alpha+1}\| u(\varphi')^2 \varphi^n\|_{\nu_\beta},\\ \nonumber && \;\;\;\;\;\;\;\;\;\;\;\;\;\;\;\;\;\limsup\limits_{n\rightarrow \infty}(n+1)^{\alpha-1} \|u''\varphi^n\|_{\nu_\beta}.\Big\}.\end{eqnarray*} \end{thm}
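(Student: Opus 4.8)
The plan is to prove the upper and lower comparisons separately, both resting on the additive decomposition behind (\ref{4.1})--(\ref{4.2}). For the upper bound I would start from
\[(uC_\varphi f)''=u''\,(f\circ\varphi)+(2u'\varphi'+u\varphi'')\,(f'\circ\varphi)+u(\varphi')^2\,(f''\circ\varphi),\]
which yields, exactly as in (\ref{4.2}),
\[\|uC_\varphi\|_{e,\mathcal B^\alpha\to\mathcal Z_\beta}\preceq\|u''C_\varphi\|_{e,\mathcal B^\alpha\to H_{\nu_\beta}^\infty}+\|u(\varphi')^2C_\varphi\|_{e,H_{\nu_{\alpha+1}}^\infty\to H_{\nu_\beta}^\infty}+\|(2u'\varphi'+u\varphi'')C_\varphi\|_{e,H_{\nu_\alpha}^\infty\to H_{\nu_\beta}^\infty}.\]
Since $\alpha>1$, Lemma 4.2(iii) identifies the first term with $\limsup_{n\to\infty}(n+1)^{\alpha-1}\|u''\varphi^n\|_{\nu_\beta}$, while Lemma 2.1(ii) together with Lemma 2.2(i) (with source weight $\nu_{\alpha+1}$, resp.\ $\nu_\alpha$, and target weight $\nu_\beta$) turns the other two into $\limsup_{n\to\infty}(n+1)^{\alpha+1}\|u(\varphi')^2\varphi^n\|_{\nu_\beta}$ and $\limsup_{n\to\infty}(n+1)^{\alpha}\|(2u'\varphi'+u\varphi'')\varphi^n\|_{\nu_\beta}$. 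A sum of three nonnegative terms is at most three times their maximum, so the upper estimate follows at once.

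For the lower bound I would, for each of the three quantities on the right, choose a sequence $\{z_k\}$ with $|\varphi(z_k)|\to1$ realizing it; writing $w_k=\varphi(z_k)$ and $\rho_k=1-|w_k|^2$, Lemma 2.1(ii)/Lemma 2.2(i) let me pass freely between the $n$-th power form and the pointwise form, for instance $\limsup_{n\to\infty}(n+1)^{\alpha+1}\|u(\varphi')^2\varphi^n\|_{\nu_\beta}\asymp\limsup_{|\varphi(z)|\to1}(1-|z|^2)^\beta|u(\varphi')^2|(1-|\varphi|^2)^{-(\alpha+1)}$. The crux is to isolate, one at a time, the three contributions $u''\,f(w_k)$, $(2u'\varphi'+u\varphi'')\,f'(w_k)$ and $u(\varphi')^2f''(w_k)$ to $(uC_\varphi f)''(z_k)$, so that a single term survives.

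To do this I would test against bounded sequences in $\mathcal B^\alpha$ converging to $0$ uniformly on compacta whose $2$-jet at $w_k$ is \emph{pure}. Concretely, let $P_k,Q_k,R_k$ be fixed (only $\alpha$-dependent) linear combinations of the three building blocks
\[\frac{(1-|w_k|^2)^{j+1}}{(1-\overline{w_k}z)^{\alpha+j}},\qquad j=0,1,2,\]
each of $\mathcal B^\alpha$-norm $\asymp1$ and tending to $0$ uniformly on compacta, chosen so that $(P_k,P_k',P_k'')(w_k)=(0,0,\ast)$, $(Q_k,Q_k',Q_k'')(w_k)=(0,\ast,0)$ and $(R_k,R_k',R_k'')(w_k)=(\ast,0,0)$, with $\ast$ of the exact orders $\rho_k^{-(\alpha+1)}$, $\rho_k^{-\alpha}$, $\rho_k^{-(\alpha-1)}$; such combinations exist because the $3\times3$ matrix of values, first and second derivatives of the blocks at $w_k$ is a nonsingular Vandermonde-type matrix. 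Then $(uC_\varphi P_k)''(z_k)=u(z_k)\varphi'(z_k)^2P_k''(w_k)$ has no $u''$ or $(2u'\varphi'+u\varphi'')$ part, so by Lemma 2.6 every compact $T$ satisfies $\|TP_k\|_{\mathcal Z_\beta}\to0$ and hence
\[\|uC_\varphi-T\|\succeq\limsup_{k\to\infty}(1-|z_k|^2)^\beta|u(z_k)\varphi'(z_k)^2|\rho_k^{-(\alpha+1)}\asymp\limsup_{n\to\infty}(n+1)^{\alpha+1}\|u(\varphi')^2\varphi^n\|_{\nu_\beta}.\]
Testing with $Q_k$ and $R_k$ yields the analogous lower bounds for the other two maxima; taking $\inf_T$ and then the maximum of the three gives $\succeq$.

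I expect the decisive difficulty to be exactly this disentangling. The natural candidates $f_k,g_k,h_k$ of (\ref{4.3})--(\ref{4.5}), which use only the blocks $j=0,1$, have \emph{linearly dependent} $2$-jets at $w_k$---indeed $\alpha f_k-2(\alpha+1)g_k+(\alpha+2)h_k$ vanishes to second order there---so they span only a two-dimensional jet space and pin down $(u'',\,2u'\varphi'+u\varphi'',\,u(\varphi')^2)$ only modulo one direction. This is harmless for $0<\alpha<1$, where the $u''$ term drops out (Lemma 4.2(i)) and two functions suffice, but for $\alpha>1$ the extra $u''$ term cannot be separated from $f_k,g_k,h_k$ alone; adjoining the third block $j=2$ restores the missing dimension and is what makes the pure-jet functions $P_k,Q_k,R_k$, hence the lower estimate, work.
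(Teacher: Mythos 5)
Your upper estimate coincides with the paper's: the decomposition behind (\ref{4.2}), Lemma 4.2(iii) for the $u''$ term, and Lemma 2.1 combined with Lemma 2.2 for the other two pieces. Your lower estimate, however, takes a genuinely different route from the paper, and it is the sounder one. The paper tests against exactly the families $f_k,g_k,h_k$ of (\ref{4.3})--(\ref{4.5}), each of which annihilates one jet component at $\varphi(z_k)$, obtains the bounds (\ref{4.6})--(\ref{4.8}) on the three combinations $|B_k+2(\alpha+1)C_k|$, $|A_k/(\alpha+1)-\alpha C_k|$, $|2A_k/(\alpha+2)+\alpha B_k/(\alpha+2)|$, and then asserts that these, together with the boundedness of $A_k,B_k,C_k$, force the bound by $\max\{|A_k|,|B_k|,|C_k|\}$. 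Your diagnosis of why this cannot work is correct, and is in fact an understatement: $\alpha f_k-2(\alpha+1)g_k+(\alpha+2)h_k$ vanishes \emph{identically}, since all three functions lie in the two--dimensional span of $(1-\overline{\varphi(z_k)}z)^{-\alpha}$ and $(1-\overline{\varphi(z_k)}z)^{-(\alpha+1)}$. Consequently the three linear functionals implicit in (\ref{4.6})--(\ref{4.8}) all annihilate the direction $(A,B,C)=\bigl(\alpha(\alpha+1),\,-2(\alpha+1),\,1\bigr)$, so no triangle--inequality manipulation of those three quantities can dominate any single coordinate; the paper's concluding step ``Then by (\ref{4.6})--(\ref{4.8}) it follows that $\ldots$'' is an unjustified deduction. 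Your construction is precisely what repairs it: adjoining the block $j=2$ makes the scaled jet matrix, with rows $\bigl(1,\ (\alpha+j)\overline{\varphi(z_k)},\ (\alpha+j)(\alpha+j+1)\overline{\varphi(z_k)}^{\,2}\bigr)$, $j=0,1,2$, have determinant $2\,\overline{\varphi(z_k)}^{\,3}$ (a Vandermonde after one column operation), hence uniformly bounded inverse as $|\varphi(z_k)|\to 1$; the pure--jet functions $P_k,Q_k,R_k$ therefore exist with uniformly bounded $\mathcal{B}^\alpha$-norms, tend to zero on compacta, and isolate the three quantities one at a time, after which Lemma 2.1(ii) and Lemma 2.2(i) convert the pointwise limsups into the $\varphi^n$ form. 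In short: the paper's argument is computationally lighter but logically incomplete at its last step, while yours costs a $3\times3$ linear solve and actually separates the three terms.

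Three small points for the write-up. The passage between the pointwise and $\varphi^n$ forms via Lemma 2.1(ii) requires the auxiliary operators $u''C_\varphi:H_{\nu_{\alpha-1}}^\infty\rightarrow H_{\nu_\beta}^\infty$, $(2u'\varphi'+u\varphi'')C_\varphi:H_{\nu_\alpha}^\infty\rightarrow H_{\nu_\beta}^\infty$ and $u(\varphi')^2C_\varphi:H_{\nu_{\alpha+1}}^\infty\rightarrow H_{\nu_\beta}^\infty$ to be bounded; this follows from the necessity parts of Theorems 3.1 and 3.3 and should be cited. If $\sup_{z\in\mathbb{D}}|\varphi(z)|<1$, no sequence with $|\varphi(z_k)|\rightarrow1$ exists, but then all three right-hand quantities vanish and the lower estimate is vacuous, so a one-sentence remark suffices. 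Finally, the fact that a compact $T$ sends bounded sequences converging to zero uniformly on compacta to norm-null sequences is the converse of Lemma 2.6 rather than Lemma 2.6 itself; the paper commits the same abuse, but it deserves a correct justification.
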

\begin{proof}
The upper estimate.  By Lemma 4.2 (iii) we obtain that $$ \|u''C_\varphi\|_{e,\mathcal{B}^\alpha\rightarrow H_{\nu_\beta}^\infty}\asymp \limsup\limits_{n\rightarrow \infty} (n+1)^{\alpha-1}\|u''\varphi^n\|_{\nu_\beta}.$$ Then by  (\ref{4.2}) and the proof for the upper estimate in Theorem 4.3, we obtain the upper estimate.

The lower estimate.  Let $\{z_k\}$ be a sequence in $\mathbb{D}$ such that $|\varphi(z_k)|\rightarrow 1$ as $k\rightarrow \infty.$ Using test functions defined in (\ref{4.3})-(\ref{4.5}) and for every compact operator $T:\mathcal{B}^\alpha\rightarrow \mathcal{Z}_\beta,$ it follows that
\begin{eqnarray}&&\|uC_\varphi-T\|_{\mathcal{B}^\alpha\rightarrow \mathcal{Z}_\beta}\succeq \limsup\limits_{k\rightarrow \infty} \|uC_\varphi (f_k)\|_{\mathcal{Z}_\beta}\nonumber\\&&\succeq \limsup\limits_{k\rightarrow \infty} (1-|z_k|^2)^\beta\Big| (2u'(z_k)\varphi'(z_k)+u(z_k)\varphi''(z_k)) \frac{-\overline{\varphi(z_k)}}{(1-|\varphi(z_k)|^2)^\alpha}\nonumber\\&&\;\;\;\;\;\;\;\;\;\;\;\;\;\;\;\;\;\;\;\;\;\;  +u(z_k)(\varphi'(z_k))^2\frac{-2(\alpha+1)(\overline{\varphi(z_k)})^2}{(1-|\varphi(z_k)|^2)^{\alpha+1}}  \Big|. \label{4.6}\end{eqnarray}
\begin{eqnarray}&&\|uC_\varphi-T\|_{\mathcal{B}^\alpha\rightarrow \mathcal{Z}_\beta}\succeq \limsup\limits_{k\rightarrow \infty} \|uC_\varphi (g_k)\|_{\mathcal{Z}_\beta}\nonumber\\&&\succeq \limsup\limits_{k\rightarrow \infty} (1-|z_k|^2)^\beta\Big| u''(z_k)\frac{1}{(\alpha+1)(1-|\varphi(z_k)|^2)^{\alpha-1}} \nonumber\\&&\;\;\;\;\;\;\;\;\;\;\;\;\;\;\;\;\;\;\;\;\;\;  +u(z_k)(\varphi'(z_k))^2\frac{- \alpha  (\overline{\varphi(z_k)})^2}{(1-|\varphi(z_k)|^2)^{\alpha+1}}\Big|.\label{4.7} \end{eqnarray}
\begin{eqnarray}&&\|uC_\varphi-T\|_{\mathcal{B}^\alpha\rightarrow \mathcal{Z}_\beta}\succeq \limsup\limits_{k\rightarrow \infty} \|uC_\varphi (h_k)\|_{\mathcal{Z}_\beta}\nonumber\\&&\succeq \limsup\limits_{k\rightarrow \infty} (1-|z_k|^2)^\beta\Big| u''(z_k)\frac{2}{(\alpha+2)(1-|\varphi(z_k)|^2)^{\alpha-1}} \nonumber\\&&\;\;\;\;\;\;\;\;\;\;\;\;\;\; +(2u'(z_k)\varphi'(z_k)+u(z_k)\varphi''(z_k))\frac{ \alpha\overline{\varphi(z_k)}}{(\alpha+2)(1-|\varphi(z_k)|^2 )^\alpha} \Big|.\;\;\;\label{4.8} \end{eqnarray}
Denote \begin{eqnarray*} &&A_k:=  \frac{(1-|z_k|^2)^\beta  u''(z_k) }{(1-|\varphi(z_k)|^2)^{\alpha-1}},\nonumber\\&&
B_k:= \frac{(1-|z_k|^2)^\beta (2u'(z_k)\varphi'(z_k)+u(z_k)\varphi''(z_k))\overline{\varphi(z_k)} }{(1-|\varphi(z_k)|^2)^\alpha},\nonumber\\&&C_k:= \frac{(1-|z_k|^2)^\beta  u(z_k)(\varphi'(z_k))^2 (\overline{\varphi(z_k)})^2}{(1-|\varphi(z_k)|^2)^{\alpha+1}}.
 \end{eqnarray*}
Then (\ref{4.6})-(\ref{4.8}) become
\begin{eqnarray*}&&\limsup\limits_{k\rightarrow \infty} \|uC_\varphi (f_k)\|_{\mathcal{Z}_\beta}\succeq \limsup\limits_{k\rightarrow \infty}\Big|B_k+2(\alpha+1)C_k\Big|, \nonumber\\&&\limsup\limits_{k\rightarrow \infty} \|uC_\varphi (g_k)\|_{\mathcal{Z}_\beta}\succeq \limsup\limits_{k\rightarrow \infty} \Big| \frac{A_k}{\alpha+1}-\alpha C_k\Big|,\nonumber\\
&&\limsup\limits_{k\rightarrow \infty} \|uC_\varphi (h_k)\|_{\mathcal{Z}_\beta}\succeq \limsup\limits_{k\rightarrow \infty}\Big|\frac{2A_k}{\alpha+2}+\frac{\alpha}{\alpha+2}B_k\Big| . \end{eqnarray*}

By the boundedness of $uC_\varphi:\mathcal{B}^\alpha \rightarrow \mathcal{Z}_\beta$, it follows that $\limsup\limits_{k\rightarrow \infty} \|uC_\varphi (f_k)\|_{\mathcal{Z}_\beta}<\infty,\;\limsup\limits_{k\rightarrow \infty} \|uC_\varphi (g_k)\|_{\mathcal{Z}_\beta}<\infty\;\mbox{and}\;\limsup\limits_{k\rightarrow \infty} \|uC_\varphi (h_k)\|_{\mathcal{Z}_\beta}<\infty.$ Thus we can obtain that $\limsup\limits_{k\rightarrow \infty}  |A_k|<\infty,\limsup\limits_{k\rightarrow \infty}  |B_k|<\infty\;\mbox{and}\;\limsup\limits_{k\rightarrow \infty}  |C_k|<\infty.  $   Then by  (\ref{4.6})-(\ref{4.8}) it follows that
\begin{eqnarray*}  \|uC_\varphi\|_{e,\mathcal{B}^\alpha \rightarrow \mathcal{Z}_\beta}&=&\inf\limits_{T\in K(\mathcal{B}^\alpha,\mathcal{Z}_\beta)}\|uC_\varphi-T\|_{\mathcal{B}^\alpha\rightarrow \mathcal{Z}_\beta}\nonumber\\ &\succeq & \max\Big\{\limsup\limits_{|\varphi(z_k)|\rightarrow1 } \frac{(1-|z_k|^2)^\beta |u''(z_k)|}{(1-|\varphi(z_k)|^2)^{\alpha-1}} ,  \nonumber\\ &&\limsup\limits_{|\varphi(z_k)|\rightarrow1 } \frac{(1-|z_k|^2)^\beta |2u'(z_k)\varphi'(z_k)+u(z_k)\varphi''(z_k)| }{(1-|\varphi(z_k)|^2)^\alpha},\nonumber\\ &&\limsup\limits_{|\varphi(z_k)|\rightarrow1 } \frac{(1-|z_k|^2)^\beta |u(z_k)(\varphi'(z_k))^2 |}{(1-|\varphi(z_k)|^2)^{\alpha+1}} \Big\}.  \end{eqnarray*}
Then using Lemma 2.1(ii) and Lemma 2.2(i) it follows the lower estimate. This completes the proof.
\end{proof}

\begin{thm} Weighted composition operator $uC_\varphi:\mathcal{B} \rightarrow \mathcal{Z}_\beta$ is bounded. Then  \begin{eqnarray*} \|uC_\varphi\|_{e,\mathcal{B} \rightarrow \mathcal{Z}_\beta}&\asymp&\max \Big\{\limsup\limits_{n\rightarrow \infty} (n+1) \|(2u'\varphi'+u\varphi'')\varphi^n\|_{\nu_\beta},\\ \nonumber && \;\;\;\;\;\;\;\;\;\;\;\;\;\;\;\limsup\limits_{n\rightarrow \infty} (n+1)^2\| u(\varphi')^2 \varphi^n\|_{\nu_\beta},\\ \nonumber && \;\;\;\;\;\;\;\;\;\;\;\;\;\;\;\;\;\limsup\limits_{n\rightarrow \infty}(\log n)\|u''\varphi^n\|_{\nu_\beta}.\Big\}.\end{eqnarray*}\end{thm}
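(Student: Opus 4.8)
The plan is to imitate the proofs of Theorems 4.3 and 4.4, splitting $(uC_\varphi f)''=u''(f\circ\varphi)+(2u'\varphi'+u\varphi'')(f'\circ\varphi)+u(\varphi')^2(f''\circ\varphi)$ into its three natural pieces, the only genuinely new feature being the logarithmic growth carried by the $u''$-piece when $\alpha=1$. For the upper estimate I would start from the splitting (\ref{4.2}) specialised to $\alpha=1$, where $\nu_{\alpha+1}=\nu_2$ and $\nu_\alpha=\nu_1$. The term $\|u''C_\varphi\|_{e,\mathcal{B}\to H_{\nu_\beta}^\infty}$ is exactly the situation of Lemma 4.2(ii) applied with $u''$ in place of $u$ (legitimate because boundedness of $uC_\varphi$ forces (\ref{3.5}), i.e. $u''C_\varphi:\mathcal{B}\to H_{\nu_\beta}^\infty$ is bounded), so it equals $\limsup_{n\to\infty}(\log n)\|u''\varphi^n\|_{\nu_\beta}$. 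The other two terms are essential norms between weighted $H^\infty$-spaces, so Lemma 2.1(ii) combined with Lemma 2.2(i) (using $\|z^n\|_{\nu_2}\asymp(n+1)^{-2}$ and $\|z^n\|_{\nu_1}\asymp(n+1)^{-1}$) turns them into $\limsup_{n\to\infty}(n+1)^2\|u(\varphi')^2\varphi^n\|_{\nu_\beta}$ and $\limsup_{n\to\infty}(n+1)\|(2u'\varphi'+u\varphi'')\varphi^n\|_{\nu_\beta}$. As a sum of nonnegative terms is comparable to their maximum, this gives the $\preceq$ half.

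For the lower estimate I would fix $\{z_k\}\subset\mathbb{D}$ with $|\varphi(z_k)|\to1$ and test $uC_\varphi-T$, $T$ compact, against the functions $f_k,g_k,h_k$ of (\ref{4.3})--(\ref{4.5}) with $\alpha=1$ together with the cubic logarithmic function $h_{w_k}$ of Theorem 3.2 with $w_k=\varphi(z_k)$, $a_k=\log\frac{2}{1-|\varphi(z_k)|^2}$; a short computation gives $h_{w_k}(\varphi(z_k))=4a_k$, $h_{w_k}'(\varphi(z_k))=6\overline{\varphi(z_k)}/(1-|\varphi(z_k)|^2)$ and $h_{w_k}''(\varphi(z_k))=6(\overline{\varphi(z_k)})^2/(1-|\varphi(z_k)|^2)^2$, and all four families are bounded in $\mathcal{B}$ and tend to $0$ uniformly on compacta. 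For any such normalised null sequence $\psi_k$ and any compact $T$ one has $\|T\psi_k\|_{\mathcal{Z}_\beta}\to0$, whence $\|uC_\varphi\|_e\succeq\limsup_k\|uC_\varphi\psi_k\|_{\mathcal{Z}_\beta}\ge\limsup_k(1-|z_k|^2)^\beta|(uC_\varphi\psi_k)''(z_k)|$. Evaluating the product rule at $z_k$ thus bounds, by $\|uC_\varphi\|_e$, each of four prescribed linear combinations of $U_k=(1-|z_k|^2)^\beta u''(z_k)$, $V_k=(1-|z_k|^2)^\beta(2u'\varphi'+u\varphi'')(z_k)\overline{\varphi(z_k)}/(1-|\varphi(z_k)|^2)$ and $W_k=(1-|z_k|^2)^\beta u(z_k)\varphi'(z_k)^2(\overline{\varphi(z_k)})^2/(1-|\varphi(z_k)|^2)^2$, the $h_{w_k}$-combination being $4a_kU_k+6V_k+6W_k$.

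The hard part is the $u''$-term. Since $f_k,g_k,h_k$ meet $u''$ only through the bounded constants $f_k(\varphi(z_k))=0$, $g_k(\varphi(z_k))=\tfrac12$, $h_k(\varphi(z_k))=\tfrac23$, they recover only $|U_k|$ and not the logarithmic quantity $|U_k|\log\frac{e}{1-|\varphi(z_k)|^2}$; moreover the three combinations they produce turn out to be linearly dependent. The function $h_{w_k}$ cures both problems at once: it multiplies the $u''$-coefficient by $a_k\asymp\log\frac{e}{1-|\varphi(z_k)|^2}$, and because $a_k\to\infty$ the $3\times3$ system formed by the $g_k$-, $h_k$- and $h_{w_k}$-combinations becomes nonsingular. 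Solving it writes each of $a_k|U_k|$, $|V_k|$, $|W_k|$ as a combination of the admissible combinations with coefficients bounded in $k$, so all three are $\preceq\|uC_\varphi\|_e$.

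Finally, choosing $\{z_k\}$ so as to realise in turn each boundary limit superior and applying Lemma 2.1(ii) with Lemma 2.2(i) (weights $\nu_1$ and $\nu_2$) and Lemma 2.2(ii) (weight $\nu_{\log}$) converts $\limsup_k\frac{(1-|z_k|^2)^\beta|2u'\varphi'+u\varphi''|}{1-|\varphi(z_k)|^2}$, $\limsup_k\frac{(1-|z_k|^2)^\beta|u(\varphi')^2|}{(1-|\varphi(z_k)|^2)^2}$ and $\limsup_k(1-|z_k|^2)^\beta|u''|\log\frac{e}{1-|\varphi(z_k)|^2}$ into the three sequence expressions in the statement, giving the $\succeq$ half and hence the claimed comparison.
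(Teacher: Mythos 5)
Your proposal is correct, and it is built from the same ingredients as the paper's proof: the upper estimate via the splitting (\ref{4.2}), Lemma 4.2(ii) applied to $u''$ (justified by (\ref{3.5})), and Lemma 2.1(ii) together with Lemma 2.2; the lower estimate via the test families (\ref{4.3})--(\ref{4.5}) and the logarithmic cubic $\tilde h_k$. The substantive difference lies in how the three separate quantities are extracted from the tested combinations. The paper, following its proof of Theorem 4.4, asserts that (\ref{4.6})--(\ref{4.8}) with $\alpha=1$ already give separate control of $A_k$, $B_k$, $C_k$ (your $U_k$, $V_k$, $W_k$), and only afterwards brings in $\tilde h_k$ to insert the logarithm into the $u''$-term. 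As you point out, that first step is not justified as written: the coefficient rows of the $f_k$-, $g_k$-, $h_k$-combinations, namely $(0,1,2(\alpha+1))$, $(\tfrac{1}{\alpha+1},0,-\alpha)$ and $(\tfrac{2}{\alpha+2},\tfrac{\alpha}{\alpha+2},0)$, have zero determinant for every $\alpha$, with kernel spanned by $(\alpha(\alpha+1),-2(\alpha+1),1)$, so smallness of those three combinations alone does not force smallness of the individual terms. Your repair---discard $f_k$ and solve the system formed by the $g_k$-, $h_k$- and $\tilde h_k$-combinations, with rows $(\tfrac12,0,-1)$, $(\tfrac23,\tfrac13,0)$, $(4a_k,6,6)$ and determinant $\tfrac{4a_k}{3}-3\asymp a_k\to\infty$---is sound: the inversion costs only the bounded factor $a_k/(4a_k-9)\to\tfrac14$, so $\limsup_k a_k|U_k|$, $\limsup_k|V_k|$ and $\limsup_k|W_k|$ are all $\preceq\|uC_\varphi\|_{e}$ in one stroke, and Lemma 2.1(ii) with Lemma 2.2 then converts these boundary quantities into the stated sequence form. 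So your route proves the theorem and at the same time fills a genuine defect in the paper's own lower-bound argument (the same defect is present in the paper's proof of Theorem 4.4, where no logarithmic test function is at hand); the paper's organization is shorter only because it takes that singular-system step for granted.
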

\begin{proof} By Lemma 4.2 (ii), we obtain that \begin{eqnarray*} \|u''C_\varphi\|_{e,\mathcal{B}\rightarrow H_{\nu_\beta}^\infty} \asymp \limsup\limits_{n\rightarrow \infty} (\log n) \|u''\varphi ^n\|_{\nu_\beta}.\end{eqnarray*}
Then by  (\ref{4.2}) and the proof for the upper estimate in Theorem 4.3, we obtain the upper estimate.

The lower estimate. Similar to the proof in Theorem 4.4, we obtain (\ref{4.6})-(\ref{4.8}) with $\alpha=1$. Then \begin{eqnarray} \|uC_\varphi T\|_{e,\mathcal{B}^\alpha\rightarrow \mathcal{Z}_\beta} \succeq \max\Big\{\limsup\limits_{k\rightarrow \infty}  (1-|z_k|^2)^\beta |u''(z_k)|  ,  \nonumber\\ \limsup\limits_{k\rightarrow \infty} \frac{(1-|z_k|^2)^\beta |2u'(z_k)\varphi'(z_k)+u(z_k)\varphi''(z_k)| }{ 1-|\varphi(z_k)|^2 },\nonumber\\ \limsup\limits_{k\rightarrow \infty} \frac{(1-|z_k|^2)^\beta |u(z_k)(\varphi'(z_k))^2 |}{ (1-|\varphi(z_k)|^2)^2} \Big\}.  \label{4.9}\end{eqnarray}
By Lemma 2.1 (ii), Lemma 2.2 (i) and (\ref{4.9}) we have that  \begin{eqnarray*} \|uC_\varphi\|_{e,\mathcal{B} \rightarrow \mathcal{Z}_\beta}&\asymp&\max \Big\{\limsup\limits_{n\rightarrow \infty} (n+1) \|(2u'\varphi'+u\varphi'')\varphi^n\|_{\nu_\beta},\\ \nonumber && \;\;\;\;\;\;\;\;\;\;\;\;\;\;\;\limsup\limits_{n\rightarrow \infty} (n+1)^2\| u(\varphi')^2 \varphi^n\|_{\nu_\beta}.\Big\}.\end{eqnarray*}
Now choose the function  \begin{eqnarray*}\tilde{h}_k(z)=\frac{6}{a_k}\left(\log\frac{2}{1-\overline{\varphi(z_k)}z}\right)^2 -\frac{2}{a_k^2}\left( \log\frac{2}{1-\overline{\varphi(z_k)}z}\right)^3,\end{eqnarray*} where $a_k=\log\frac{2}{1-|\varphi(z_k)|^2}.$ It is easy to find that  the sequence $\{\tilde{h}_k\}\in\mathcal{B}$ converges to zero on the compact subset of $\mathbb{D}$, then  we can obtain \begin{eqnarray*}\limsup\limits_{k\rightarrow \infty}(1-|z_k|^2)^\beta |u''(z_k)|\log\frac{e}{1-|\varphi(z_k)|^2}\leq \|uC_\varphi\|_{e,\mathcal{B} \rightarrow \mathcal{Z}_\beta}.\end{eqnarray*} Further employing Lemma 2.1 (ii) and Lemma 2.2 (ii),  we get that \begin{eqnarray*} \|u C_\varphi\|_{e,\mathcal{B} \rightarrow \mathcal{Z}_\beta}\asymp\limsup\limits_{n\rightarrow \infty}(\log n)\|u''\varphi^n\|_{\nu_\beta}. \end{eqnarray*}
  This completes the proof.
\end{proof}

From the above Theorems, we obtain the new equivalent conditions  for the compactness of $uC_\varphi:\mathcal{B} \rightarrow \mathcal{Z}_\beta$.
\begin{cor} Suppose that $u\in H(\mathbb{D})$ and $\varphi\in S(\mathbb{D})$, then the followings are equivalent:

(a) $uC_\varphi:\mathcal{B}^\alpha \rightarrow \mathcal{Z}_\beta$ is compact.

(b) $uC_\varphi:\mathcal{B}^\alpha \rightarrow \mathcal{Z}_\beta$ is bounded and

(i) For $0<\alpha<1$, \begin{eqnarray}\limsup\limits_{n\rightarrow \infty} (n+1)^\alpha \|(2u'\varphi'+u\varphi'')\varphi^n\|_{\nu_\beta}=0,\label{4.10}\end{eqnarray}
\begin{eqnarray} \limsup\limits_{n\rightarrow \infty} (n+1)^{\alpha+1}\| u(\varphi')^2 \varphi^n\|_{\nu_\beta}=0. \label{4.11}\end{eqnarray}

(ii) For $\alpha=1$, $(\ref{4.10}), (\ref{4.11})$ hold and $\limsup\limits_{n\rightarrow \infty}(\log n)\|u''\varphi^n\|_{\nu_\beta}=0.$

(iii) For $\alpha>1$, $(\ref{4.10}), (\ref{4.11})$ hold and $\limsup\limits_{n\rightarrow \infty}(n+1)^{\alpha-1} \|u''\varphi^n\|_{\nu_\beta}=0.$
\end{cor}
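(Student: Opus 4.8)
The plan is to reduce everything to the fundamental fact, recorded in the Introduction, that a bounded linear operator $T$ is compact if and only if its essential norm $\|T\|_e$ equals zero. Since every compact operator is automatically bounded, the boundedness requirement appearing in (b) is a consequence of (a); conversely (b) assumes boundedness outright. Hence the whole corollary reduces to showing, in each of the three ranges of $\alpha$, that the condition $\|uC_\varphi\|_{e,\mathcal{B}^\alpha\to\mathcal{Z}_\beta}=0$ is equivalent to the displayed $\limsup$ conditions. The boundedness hypothesis is exactly what licenses us to invoke the essential-norm estimates of Section 4.

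First I would treat the case $0<\alpha<1$ using Theorem 4.3, which gives
\[
\|uC_\varphi\|_{e,\mathcal{B}^\alpha\to\mathcal{Z}_\beta}\asymp\max\{L_1,L_2\},
\]
where $L_1=\limsup_{n\to\infty}(n+1)^\alpha\|(2u'\varphi'+u\varphi'')\varphi^n\|_{\nu_\beta}$ and $L_2=\limsup_{n\to\infty}(n+1)^{\alpha+1}\|u(\varphi')^2\varphi^n\|_{\nu_\beta}$. Because $L_1,L_2\ge 0$ and the comparability $\asymp$ is two-sided, the essential norm vanishes precisely when $\max\{L_1,L_2\}=0$, that is, when $L_1=L_2=0$. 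These are exactly $(\ref{4.10})$ and $(\ref{4.11})$, which establishes part (i).

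The remaining two cases follow the identical template. For $\alpha>1$, Theorem 4.4 writes the essential norm as the maximum of $L_1$, $L_2$, and $L_3=\limsup_{n\to\infty}(n+1)^{\alpha-1}\|u''\varphi^n\|_{\nu_\beta}$; vanishing of this maximum is equivalent to $L_1=L_2=L_3=0$, yielding part (iii). For $\alpha=1$, Theorem 4.5 replaces the third entry by $\limsup_{n\to\infty}(\log n)\|u''\varphi^n\|_{\nu_\beta}$, and the same argument delivers part (ii). In each case one passes from $\|uC_\varphi\|_e=0$ to the vanishing of every $\limsup$ via the lower bound in the $\asymp$, and back via the upper bound.

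I expect essentially no technical obstacle here: the only elementary point to verify is that a maximum of finitely many nonnegative quantities is zero if and only if each quantity is zero, and that the two-sided comparability in Theorems 4.3--4.5 transfers this dichotomy between the essential norm and the $\limsup$ expressions. All of the genuine analytic work—the construction of the test functions $f_k,g_k,h_k$ and the norm computations that produce the lower estimates—has already been carried out inside those theorems, so the corollary is a direct readout of the case $\|uC_\varphi\|_e=0$.
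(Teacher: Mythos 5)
Your proposal is correct and matches the paper's own (implicit) derivation: the paper deduces the corollary directly from Theorems 4.3--4.5 together with the fact, stated in the Introduction, that a bounded operator is compact if and only if its essential norm vanishes, exactly as you do. The only elementary observations needed --- that compactness implies boundedness, that a maximum of nonnegative quantities vanishes iff each does, and that two-sided comparability $\asymp$ transfers vanishing --- are all present in your argument.
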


\end{document}